\documentclass[12pt,reqno]{amsart}

\newcommand\version{February 1, 2022}


\usepackage{amsmath,amsfonts,amsthm,amssymb,amsxtra}
\usepackage{bbm} 



\setlength{\voffset}{-.7truein}
\setlength{\textheight}{8.8truein}
\setlength{\textwidth}{6.05truein}
\setlength{\hoffset}{-.7truein}


\newtheorem{theorem}{Theorem}
\newtheorem{proposition}[theorem]{Proposition}
\newtheorem{lemma}[theorem]{Lemma}
\newtheorem{corollary}[theorem]{Corollary}

\theoremstyle{definition}

\theoremstyle{remark}




\newcommand{\1}{\mathbbm{1}}

\newcommand{\cl}{\mathrm{cl}}

\renewcommand{\epsilon}{\varepsilon}

\newcommand{\loc}{{\rm loc}}

\renewcommand{\phi}{\varphi}
\newcommand{\R}{\mathbb{R}}

\DeclareMathOperator{\dist}{dist}

\DeclareMathOperator{\Tr}{Tr}
\DeclareMathOperator{\tr}{Tr}


\begin{document}

\title[Weyl's law under minimal assumptions --- \version]{Weyl's law under minimal assumptions}

\author{Rupert L. Frank}
\address[Rupert L. Frank]{Mathe\-matisches Institut, Ludwig-Maximilans Universit\"at M\"unchen, The\-resienstr.~39, 80333 M\"unchen, Germany, and Munich Center for Quantum Science and Technology, Schel\-ling\-str.~4, 80799 M\"unchen, Germany, and Mathematics 253-37, Caltech, Pasa\-de\-na, CA 91125, USA}
\email{r.frank@lmu.de}

\renewcommand{\thefootnote}{${}$} \footnotetext{\copyright\, 2022 by the author. This paper may be reproduced, in its entirety, for non-commercial purposes.\\
Partial support through U.S.~National Science Foundation grant DMS-1954995 and through the German Research Foundation grant EXC-2111-390814868 is acknowledged.}

\dedicatory{Dedicated to the memory of Sergey Naboko}

\begin{abstract}
	We show that Weyl's law for the number and the Riesz means of negative eigenvalues of Schr\"odinger operators remains valid under minimal assumptions on the potential, the vector potential and the underlying domain.
\end{abstract}

\maketitle

\section{Introduction and main result}

The celebrated Weyl law states that
\begin{equation}
	\label{eq:weylintro}
	\lim_{h\to 0_+} h^d\, N(0,-h^2\Delta_\Omega +V) = L_{0,d}^\cl \int_\Omega V(x)_-^{d/2}\,dx
\end{equation}
where $a_\pm :=\max\{\pm a,0\}$, where $L_{0,d}^\cl$ is a special case of
$$
L_{\gamma,d}^\cl := (4\pi)^{-d/2} \frac{\Gamma(\gamma+1)}{\Gamma(\gamma+d/2+1)}
$$
and where $N(0,H)$ denotes the number of negative eigenvalues, counting multiplicities, of a selfadjoint operator $H$. We restrict ourselves here to the case where the Laplacian $-\Delta_\Omega$ is considered with Dirichlet boundary conditions.

Weyl proved \eqref{eq:weylintro} in the case where $V$ is a negative constant and $\Omega$ a bounded, open set with sufficiently smooth boundary. Following his work, the asymptotics were extended to a larger class of $\Omega$'s and $V$'s. In the late 60's and early 70's Birman and Solomyak emphasized the problem of proving \eqref{eq:weylintro} under minimal assumptions on $\Omega$ and $V$. In fact, they also considered more general problems involving higher order elliptic differential operators with variable coefficients, but this is beyond the scope of our discussion here. Based, in part, on their work, Rozenblum \cite{Ro0,Ro} succeeded in proving \eqref{eq:weylintro} for $V$ equal to a negative constant under the sole assumption that $\Omega\subset\R^d$ is an open set of finite measure. In \cite{Ro2,Ro3} he extended this result to $V\in L^{d/2}(\Omega)$ for $d\geq 3$, which is the optimal assumption on $V_-$ for \eqref{eq:weylintro} to be finite.

Our modest contribution in this paper is to show that the assumption on $V_+$ can be relaxed to $V_+\in L^1_\loc(\Omega)$. Thus, we show that even wild positive singularities cannot weaken the influence of $V_-$ to \eqref{eq:weylintro}. We emphasize that the local integrability assumption means that $V_+$ is integrable on any compact set contained in $\Omega$ and, in particular, it does not restrict in any way the behavior of $V$ near infinity or, if $\Omega\neq\R^d$, near the boundary of $\Omega$.

In fact, we prove the corresponding result also for the Riesz means $\Tr \left( -h^2\Delta_\Omega + V \right)_-^\gamma$ and, to simplify various statement, we interpret this quantity in the case $\gamma=0$ as $N(0,-h^2\Delta_\Omega+V)$.

Technically, under the assumption $V\in L^1_\loc(\Omega)$ with $V_-\in L^{\gamma+d/2}(\Omega)$, where $\gamma\geq 1/2$ if $d=1$, $\gamma>0$ if $d=2$ and $\gamma\geq 0$ if $d\geq 3$, the operator $-h^2\Delta_\Omega +V$ is defined as a selfadjoint operator in $L^2(\Omega)$ via the quadratic form
$$
\int_\Omega \left( h^2 |\nabla \psi|^2 + V|\psi|^2 \right)dx
$$
with form domain $\{ \psi\in H^1_0(\Omega):\  V_+ |u|^2 \in L^1(\Omega) \}$. By Sobolev embedding theorems, the above conditions on $\gamma$ guarantee that this quadratic form is lower semibounded and closed; see, e.g., \cite[Section 4.1]{FrLaWe}.

Our main result is the following.

\begin{theorem}\label{main}
	Let $\gamma\geq 1/2$ if $d=1$, $\gamma>0$ if $d=2$ and $\gamma\geq 0$ if $d\geq 3$. Let $\Omega\subset\R^d$ be an open set and let $V\in L^1_\loc(\Omega)$ with $V_-\in L^{\gamma+d/2}(\Omega)$. Then
	\begin{equation}
		\label{eq:main}
		\lim_{h\to 0_+} h^d \Tr \left( -h^2\Delta_\Omega + V \right)_-^\gamma = L_{\gamma,d}^\cl \int_\Omega V(x)_-^{\gamma+d/2}\,dx \,,
	\end{equation}
	where $-\Delta_\Omega + V$ is considered in $L^2(\Omega)$ with Dirichlet boundary conditions.
\end{theorem}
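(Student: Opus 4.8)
I would establish the upper and lower bounds in \eqref{eq:main} separately; the upper bound is soft, and the lower bound carries the new point. For the \emph{upper bound}, since the form domain of $-h^2\Delta_\Omega+V$ is contained in $H^1_0(\Omega)$ and there its quadratic form dominates that of $-h^2\Delta_\Omega-V_-$, the min--max principle shows that the ordered eigenvalues of $-h^2\Delta_\Omega+V$ lie pointwise above those of $-h^2\Delta_\Omega-V_-$; since $t\mapsto t_-^\gamma$ is nonincreasing, $\Tr(-h^2\Delta_\Omega+V)_-^\gamma\le\Tr(-h^2\Delta_\Omega-V_-)_-^\gamma$. Extending $V_-$ by zero outside $\Omega$ and comparing $H^1_0(\Omega)$ with $H^1(\R^d)$ in the same way gives $\Tr(-h^2\Delta_\Omega-V_-)_-^\gamma\le\Tr(-h^2\Delta_{\R^d}-V_-)_-^\gamma$, and the latter obeys the classical semiclassical Weyl asymptotics with the sharp constant for potentials in $L^{\gamma+d/2}(\R^d)$ (see e.g.\ \cite{FrLaWe}). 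Hence $\limsup_{h\to0_+}h^d\Tr(-h^2\Delta_\Omega+V)_-^\gamma\le L_{\gamma,d}^\cl\int_\Omega V_-^{\gamma+d/2}\,dx$.

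For the \emph{lower bound when $\gamma=1$} I would use coherent states. Fix a real, even $g\in C_c^\infty(\ball(0,1))$ with $\|g\|_2=1$, put $\ell=\ell(h):=\sqrt h$, write $g_\ell(z):=\ell^{-d/2}g(z/\ell)$, and for $u$ with $\ball(u,\ell)\subset\Omega$ and $p\in\R^d$ set $\phi^h_{u,p}(x):=g_\ell(x-u)\,e^{ip\cdot x/h}$. Each $\phi^h_{u,p}$ is bounded and compactly supported in $\Omega$, whence --- and this is the only place where the hypothesis $V_+\in L^1_\loc(\Omega)$ enters --- it lies in the form domain of $-h^2\Delta_\Omega+W$ for any $W$ with $W_+\in L^1_\loc(\Omega)$. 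These states satisfy $(2\pi h)^{-d}\int_{\R^d}\int_{\R^d}|\phi^h_{u,p}\rangle\langle\phi^h_{u,p}|\,du\,dp=\1$, and one computes $\langle\phi^h_{u,p},(-h^2\Delta_\Omega+W)\phi^h_{u,p}\rangle=|p|^2+(h/\ell)^2\|\nabla g\|_2^2+(W*|g_\ell|^2)(u)$. Since $t\mapsto t_-$ is convex, Jensen's inequality in the spectral representation of $-h^2\Delta_\Omega+W$ gives $\langle\phi^h_{u,p},(-h^2\Delta_\Omega+W)_-\,\phi^h_{u,p}\rangle\ge\bigl(\langle\phi^h_{u,p},(-h^2\Delta_\Omega+W)\phi^h_{u,p}\rangle\bigr)_-$; integrating over $\{u:\ball(u,\ell)\subset\Omega\}\times\R^d$ against $(2\pi h)^{-d}\,du\,dp$, using that the resulting partial resolution of identity is $\le\1$ while $(-h^2\Delta_\Omega+W)_-\ge0$, and carrying out the momentum integral yields
\[
h^d\Tr(-h^2\Delta_\Omega+W)_-\ \ge\ L_{1,d}^\cl\int_{\{\ball(u,\ell)\subset\Omega\}}\bigl((h/\ell)^2\|\nabla g\|_2^2+(W*|g_\ell|^2)(u)\bigr)_-^{1+d/2}\,du.
\]
As $h\to0_+$ one has $(h/\ell)^2\to0$, $W*|g_\ell|^2\to W$ a.e.\ on $\Omega$, and $(W*|g_\ell|^2)_-\le W_-*|g_\ell|^2\to W_-$ in $L^{1+d/2}$ (extending $W_-$ by zero); dominated convergence with dominating functions converging in $L^1$ then gives $\liminf_{h\to0_+}h^d\Tr(-h^2\Delta_\Omega+W)_-\ge L_{1,d}^\cl\int_\Omega W_-^{1+d/2}\,dx$ for every $W\in L^1_\loc(\Omega)$ with $W_-\in L^{1+d/2}(\Omega)$.

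To \emph{bootstrap to general $\gamma$}: for $\gamma=0$ and $W\in L^1_\loc(\Omega)$ with $W_-\in L^{d/2}(\Omega)$, truncate by $W^{(M)}:=\max\{W,-M\}$, so $W^{(M)}_-=\min\{W_-,M\}\in L^{1+d/2}(\Omega)$ and $W^{(M)}\ge W$ gives $N(0,-h^2\Delta_\Omega+W)\ge N(0,-h^2\Delta_\Omega+W^{(M)})$. Applying the elementary layer-cake bound $N(0,K)\ge s^{-1}\bigl(\Tr K_-^1-\Tr(K+s)_-^1\bigr)$ ($s>0$) to $K=-h^2\Delta_\Omega+W^{(M)}$ and invoking the $\gamma=1$ asymptotics for $K$ and $K+s$, one lets $h\to0_+$, then $s\to0_+$ (monotone convergence, using convexity of $t\mapsto t^{1+d/2}$ and $(1+\tfrac d2)L_{1,d}^\cl=L_{0,d}^\cl$), then $M\to\infty$, obtaining $\liminf_{h\to0_+}h^dN(0,-h^2\Delta_\Omega+W)\ge L_{0,d}^\cl\int_\Omega W_-^{d/2}\,dx$; taking $W=V$ this already settles $\gamma=0$, $d\ge3$. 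For $\gamma>0$, write $\Tr(-h^2\Delta_\Omega+V)_-^\gamma=\gamma\int_0^\infty s^{\gamma-1}N(0,-h^2\Delta_\Omega+V+s)\,ds$; for fixed $s>0$ the potential $V+s$ has $(V+s)_-=(V_--s)_+$ supported on the finite-measure set $\{V_->s\}$, hence $(V+s)_-\in L^{d/2}(\Omega)$, so the previous step applies to it, and Fatou's lemma together with $\gamma\int_0^\infty s^{\gamma-1}(a-s)_+^{d/2}\,ds=a^{\gamma+d/2}\,\Gamma(\gamma+1)\Gamma(\tfrac d2+1)/\Gamma(\gamma+\tfrac d2+1)$ and $L_{0,d}^\cl\,\Gamma(\gamma+1)\Gamma(\tfrac d2+1)/\Gamma(\gamma+\tfrac d2+1)=L_{\gamma,d}^\cl$ produces the matching lower bound in \eqref{eq:main}. (Throughout, Cwikel--Lieb--Rozenblum and Lieb--Thirring bounds under exactly the stated hypotheses ensure discreteness of the negative spectrum and provide the domination needed in these limits; cf.\ \cite{FrLaWe}.)

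The main obstacle, and the heart of the proof, is the lower bound and the tension just displayed: coherent states are the natural device for a merely locally integrable $V_+$, since they automatically belong to the form domain and one never has to control the (possibly infinite) energy fluctuations they carry --- but the Berezin--Lieb/Jensen inequality that makes them effective requires $t\mapsto t_-^\gamma$ to be convex, i.e.\ $\gamma\ge1$. Descending from there to the eigenvalue-counting function ($\gamma=0$) and the low-order Riesz means ($0<\gamma<1$), while keeping the various error terms controlled through the truncation, layer-cake and Fatou steps, is the delicate part.
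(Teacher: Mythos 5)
Your proof is correct and reaches the same conclusion, but for $\gamma\neq 1$ it follows a genuinely different route than the paper, which is worth spelling out.

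For $\gamma=1$ the idea is the same as the paper's (coherent states supported in a compact subset of $\Omega$, so that only $V_+\in L^1_\loc$ is used), but the implementation differs in two small ways: you apply Jensen's inequality $\langle\phi,H_-\phi\rangle\geq(\langle\phi,H\phi\rangle)_-$ together with $P\leq\1$ rather than the paper's trial-density-matrix form of the variational principle, and you let the coherent-state width $\ell=\sqrt h$ shrink with $h$ rather than using a fixed scale $\delta$ and a truncation $V_M$ that are sent to their limits after $h\to 0_+$. Both implementations work; yours avoids introducing $V_M$ at this stage, at the price of a Pratt-type dominated convergence argument. One technical point worth stating explicitly if you write this up: the Jensen step needs $\phi^h_{u,p}$ to be a unit vector in the \emph{form} domain of $-h^2\Delta_\Omega+W$, which you have, and the identity $\langle\phi,H\phi\rangle=\int\lambda\,d\mu_\phi$ must be invoked at the level of forms, not operator domains.

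For $\gamma\neq 1$, the paper proceeds quite differently. It proves the case $\gamma>1$ from $\gamma=1$ via the Riesz-representation formula and dominated convergence, and then establishes a separate weak-convergence statement (its Theorem \ref{weylschr}), $h^d\rho_{(-h^2\Delta_\Omega+V)_-^{\gamma-1}}\rightharpoonup L^{\cl}_{\gamma-1,d}V_-^{\gamma+d/2-1}$, to descend from $\gamma$ to $\gamma-1$, combined with an IMS localization and Rot'feld's inequality for the upper bound. You instead descend from $\gamma=1$ to $\gamma=0$ by the elementary layer-cake inequality $N(0,K)\geq s^{-1}(\Tr K_- -\Tr(K+s)_-)$, and then go from $\gamma=0$ to arbitrary $\gamma>0$ by $\Tr H_-^\gamma=\gamma\int_0^\infty s^{\gamma-1}N(0,H+s)\,ds$ and Fatou. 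This is shorter and more elementary; the relevant integrability ($(V+s)_-\in L^{d/2}$ thanks to the finite measure of $\{V_->s\}$, and $(W^{(M)})_-\in L^{1+d/2}$ after truncation) and the Beta-function identities all check out. Your upper bound uses only the soft comparison $\Tr(-h^2\Delta_\Omega+V)_-^\gamma\le\Tr(-h^2\Delta_{\R^d}-V_-)_-^\gamma$ and known whole-space asymptotics, exactly as the paper itself notes in Remark (g) suffices.

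What your route buys is simplicity and avoidance of both the weak-convergence theorem and the IMS/Rot'feld machinery. What it gives up is self-containedness (you cite whole-space Weyl asymptotics for $V_-\in L^{\gamma+d/2}$, which ultimately rest on Rozenblum's work and its Riesz-means extensions, whereas the paper reproves this en route) and applicability to the magnetic extension in Section \ref{sec:magnetic}, where the upper bound is no longer soft: the paper's weak-convergence argument is precisely what allows descending from $\gamma\geq 3/2$, where the Laptev--Weidl inequality is available, to the full range. So your proof is a valid and in places cleaner alternative for Theorem \ref{main}, but the paper's heavier machinery is motivated by its later use.
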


\emph{Remarks.} (a) For $\gamma=0$, the assumption $d\geq 3$ is necessary. Indeed, as shown in \cite{BiLa,NaSo} for $d=1,2$ there are $V\in L^{d/2}(\Omega)$ such that $-h^2\Delta_\Omega+V$ can be defined and such that \eqref{eq:main} fails. It is unknown whether such $V$ exist also for $0<\gamma<1/2$ in $d=1$. For constant $V$ and $\Omega$ of finite measure, the theorem holds for all $\gamma\geq 0$, as is well-known \cite[Corollaries 3.15 and 3.17]{FrLaWe}.\\
(b) Our proof shows that the inequality $\geq$ in \eqref{eq:main} with $\lim$ replaced by $\liminf$ holds for \emph{all} $\gamma\geq 0$, provided the operators $-h^2\Delta_\Omega+V$ are well-defined.\\
(c) We reiterate that under the additional assumption $V_+\in L^{\gamma+d/2}(\Omega)$ this theorem can be considered known. For $\gamma=0$ it is due to Rozenblum \cite{Ro2,Ro3}, and for $\gamma>0$ one can easily modify his arguments. Our contribution is to weaken the assumption on $V_+$ to $V_+\in L^1_\loc(\Omega)$.\\
(d) We do not see how to obtain our result using the method of Birman--Solomyak and Rozenblum. Instead we use, at least for $\gamma=1$, the method of coherent states. The results for $\gamma\neq 1$ are deduced from that for $\gamma=1$ by relatively soft arguments.\\
(e) In connection with Weyl's law the method of coherent states is used, for instance, in Berezin's work \cite{Be}. In the context of Schr\"odinger operators it appears, for instance, in \cite{Th,Li}. Closest to our work is probably \cite{EvLeSiSo}, who use a weak convergence argument to deduce the $\gamma=0$ case from the $\gamma=1$ case. This weak convergence argument is, in part, inspired by \cite{LiSi}. We show more generally that the $\gamma-1$ case can be deduce from the $\gamma$ case, provided $\gamma\geq 1$. This generalization is not really necessary for the proof of Theorem \ref{main}, but it is for its extension mentioned in (h) below.\\
(f) Our theorem concerns leading order semiclassics without a remainder. As an aside, we note that the method of coherent states can be modified to derive very good remainder estimates under additional assumptions on $V$ \cite{SoSp}. For its use to derive subleading terms for constant $V$ in sufficiently regular $\Omega$, see, for instance, \cite{FrGe1,FrGe2,FrLa}.\\
(g) The proof of Theorem \ref{main} splits naturally into two parts, namely,
\begin{equation}
	\label{eq:mainlower}
	\liminf_{h\to 0_+} h^d \Tr \left( -h^2\Delta_\Omega + V \right)_-^\gamma \geq L_{\gamma,d}^\cl \int_\Omega V(x)_-^{\gamma+d/2}\,dx \,,
\end{equation}
and
\begin{equation}
	\label{eq:mainupper}
	\limsup_{h\to 0_+} h^d \Tr \left( -h^2\Delta_\Omega + V \right)_-^\gamma \leq L_{\gamma,d}^\cl \int_\Omega V(x)_-^{\gamma+d/2}\,dx \,.
\end{equation}
The difficulty coming from the weak assumption $V_+\in L^1_\loc(\Omega)$ on the potential arises only in the proof of \eqref{eq:mainlower}, since for \eqref{eq:mainupper} we can bound $\Tr(-h^2\Delta_\Omega +V)_-^\gamma\leq \Tr(-h^2\Delta_\Omega -V_-)_-^\gamma$ and use known results for the asymptotics of the latter, see (c). However, we have chosen to also present a proof of \eqref{eq:mainupper} to advertize the technique in (e).\\
(h) Theorem \ref{main} remains valid in the presence of a magnetic field. This will be discussed in Section \ref{sec:magnetic}.

\begin{corollary}\label{maincor}
	Let $\gamma\geq 0$, let $\Omega\subset\R^d$ be an open set and let $V\in L^1_\loc(\Omega)$. Assume that $V_-$ is infinitesimally form bounded with respect to $-\Delta_\Omega$ and that
	$$
	\liminf_{h\to 0_+} h^d \Tr \left( -h^2\Delta_\Omega + V \right)_-^\gamma <\infty \,.
	$$
	Then $V_-\in L^{\gamma+d/2}(\Omega)$. In particular, if $\gamma$ is as in Theorem \ref{main}, then \eqref{eq:main} holds.
\end{corollary}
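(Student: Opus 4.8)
The plan is to read the corollary off from Remark (b), which asserts that the $\liminf$ version of the lower bound \eqref{eq:mainlower} holds for \emph{every} $\gamma\ge0$ as soon as the operators $-h^2\Delta_\Omega+V$ are well defined, with the convention that the right-hand side equals $+\infty$ when $V_-\notin L^{\gamma+d/2}(\Omega)$. Thus there are three things to do: check that the hypotheses of the corollary indeed make $-h^2\Delta_\Omega+V$ a lower semibounded selfadjoint operator for every $h>0$; quote Remark (b); and take the contrapositive.

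For the first point, the assumption $V\in L^1_\loc(\Omega)$ gives in particular $V_+\in L^1_\loc(\Omega)$, so $\psi\mapsto\int_\Omega V_+|\psi|^2\,dx$ with maximal domain $\{\psi\in L^2(\Omega):V_+|\psi|^2\in L^1(\Omega)\}$ is a closed nonnegative quadratic form; adding the closed nonnegative form $h^2\int_\Omega|\nabla\psi|^2\,dx$ on $H^1_0(\Omega)$ produces a closed nonnegative form on the form domain $\{\psi\in H^1_0(\Omega):V_+|\psi|^2\in L^1(\Omega)\}$ used in Theorem \ref{main}. Since $V_-$ is infinitesimally form bounded with respect to $-\Delta_\Omega$, it is a fortiori infinitesimally form bounded with respect to this sum for each fixed $h>0$ (absorb the constant $h^{-2}$ into the relative bound). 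By the KLMN theorem the form $\int_\Omega(h^2|\nabla\psi|^2+V|\psi|^2)\,dx$ is then closed and lower semibounded, and $-h^2\Delta_\Omega+V$ is by definition the associated selfadjoint operator; when $\gamma$ lies in the range allowed by Theorem \ref{main} this is the same operator considered there, both being generated by this one closed form.

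With the operators in hand, Remark (b) gives
$$
\liminf_{h\to0_+} h^d\,\Tr\left(-h^2\Delta_\Omega+V\right)_-^\gamma \ \ge\ L_{\gamma,d}^\cl\int_\Omega V(x)_-^{\gamma+d/2}\,dx \,.
$$
By hypothesis the left-hand side is finite, and $L_{\gamma,d}^\cl>0$, so $\int_\Omega V(x)_-^{\gamma+d/2}\,dx<\infty$, that is, $V_-\in L^{\gamma+d/2}(\Omega)$. If in addition $\gamma$ satisfies the restrictions of Theorem \ref{main}, then the two conditions $V\in L^1_\loc(\Omega)$ and $V_-\in L^{\gamma+d/2}(\Omega)$ are precisely its hypotheses, so \eqref{eq:main} follows.

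I do not anticipate a real obstacle here: all the analytic substance is already packaged in Remark (b) (equivalently, in the proof of \eqref{eq:mainlower}), and the only genuine task is the routine verification that $-h^2\Delta_\Omega+V$ is well defined under the stated hypotheses — which is needed merely so that the trace in the hypothesis, and the statement of Remark (b), are meaningful.
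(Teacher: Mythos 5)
Your verification that $-h^2\Delta_\Omega+V$ is a well-defined, lower-semibounded, selfadjoint operator via KLMN is fine and is a sensible preliminary (the paper implicitly relies on the hypothesis of infinitesimal form-boundedness for exactly this purpose). The gap is in the next step. Remark~(b), as stated and as proved, relaxes only the \emph{range of $\gamma$} in \eqref{eq:mainlower}; it still runs under the standing hypothesis $V_-\in L^{\gamma+d/2}(\Omega)$ of Theorem~\ref{main}. Indeed, the proof of the lower bound invokes $\|V_-\|_{1+d/2}<\infty$ (in the estimate of $\mathcal R_2'$) and uses $\langle V_-\rangle_\delta\to V_-$ in $L^{1+d/2}$; none of this makes sense when $V_-\notin L^{\gamma+d/2}$. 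Your ``convention that the right-hand side equals $+\infty$ when $V_-\notin L^{\gamma+d/2}(\Omega)$'' is therefore an extension that Remark~(b) does not actually grant, and applying it directly to $V$ is circular, since $V_-\in L^{\gamma+d/2}(\Omega)$ is precisely what the corollary is trying to establish.

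The missing step is a truncation. For $R,M>0$ set $V_M:=\max\{V,-M\}$; by the variational principle
$$
\Tr\left(-h^2\Delta_\Omega+V\right)_-^\gamma\ \ge\ \Tr\left(-h^2\Delta_\Omega+\1_{\Omega\cap B_R}V_M\right)_-^\gamma ,
$$
and the truncated potential $\1_{\Omega\cap B_R}V_M$ satisfies all hypotheses needed for Remark~(b) (its negative part is bounded and compactly supported, hence in $L^{\gamma+d/2}(\Omega)$, and the operator is trivially well-defined). Applying Remark~(b) to this truncation and then multiplying by $h^d$ and taking $\liminf_{h\to 0_+}$ yields
$$
C\ \ge\ L_{\gamma,d}^\cl \int_{\Omega\cap B_R}\left(V_M\right)_-^{\gamma+d/2}\,dx
$$
for the finite constant $C$ from the hypothesis; letting $R,M\to\infty$ by monotone convergence gives $V_-\in L^{\gamma+d/2}(\Omega)$, and then Theorem~\ref{main} applies. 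This is exactly what the paper's proof does. Once you insert this truncation argument, your proposal coincides with the paper's; without it, the argument does not close.
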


\begin{proof}
	Let $R,M>0$ and set
	\begin{equation}
		\label{eq:defvm}
		V_M(x):=\max\{ V(x),-M\} \,.
	\end{equation}
	Then, by the variational principle,
	$$
	\Tr \left( -h^2\Delta_\Omega + V \right)_-^\gamma \geq \Tr \left( -h^2\Delta_\Omega + \1_{\Omega\cap B_R} V_M \right)_-^\gamma \,.
	$$
	We multiply this inequality by $h^d$ and take the $\liminf_{h\to 0_+}$. On the left side, by assumption, we get a constant $C<\infty$, say. On the right side, we apply Remark (b) following Theorem \ref{main}, noting that $\1_{\Omega\cap B_R} V_M$ satisfies its assumptions. Thus, we obtain
	$$
	C \geq L_{\gamma,d}^\cl \int_{\Omega\cap B_R} \left( V_M(x) \right)_-^{\gamma+d/2}\,dx \,.
	$$
	Since $R,M>0$ are arbitrary, monotone convergence gives $V_-\in L^{\gamma+d/2}(\Omega)$. In particular, if $\gamma$ is as in Theorem \ref{main}, then we can apply this theorem and obtain \eqref{eq:main}.
\end{proof}

The remainder of this paper is organized as follows. In Sections \ref{sec:1}, \ref{sec:>1} and \ref{sec:<1} we prove Theorem \ref{main} in the cases $\gamma=1$, $\gamma>1$ and $\gamma<1$, respectively. In particular, the weak convergence statement alluded to above appears in Subsection \ref{sec:weakconv}. Section \ref{sec:magnetic} is devoted to an extension to the magnetic case.

It is my pleasure to thank Timo Weidl for his interest and encouragement to write this note. I am also very grateful to Grigori Rozenblum for helpful remarks.

I would like to dedicate this paper, with great respect, to the memory of Sergey Naboko. I was fortunate to have had the opportunity to interact with him, resulting in a joint paper \cite{FrHaNaSe}, and I am very grateful for all the ideas, support and advice he has shared with me, starting from the time when I took my first step in research.


\section{The case $\gamma=1$}\label{sec:1}

Our goal in this section is to prove Theorem \ref{main} in the special case $\gamma=1$. The proof splits naturally in a lower and an upper bound on $\Tr(-h^2\Delta_\Omega +V)_-$, which will be the topic of Subsections \ref{sec:upper} and \ref{sec:lower}, respectively. Both bounds rely on the method of coherent states, which we briefly review in Subsection \ref{sec:cs}.


\subsection{Basic properties of coherent states}\label{sec:cs}

Let $g\in L^2(\R^d)$ be a real function with $\|g\|=1$. The family of functions $(G_{p,q})_{p,q\in\R^d}$, defined by
\begin{equation}
	\label{eq:cohst}
	G_{p,q}(x) := e^{ip\cdot x} g(x-q) \,,
	\qquad x\in\R^d \,,
\end{equation}
is called \emph{coherent states} (associated to $g$). For a function $V$ on $\R^d$ we abbreviate
\begin{equation}
	\label{eq:meanv}
	\langle V \rangle (q):= \int_{\R^d} V(x) g(x-q)^2\,dx \,,
	\qquad q\in\R^d \,,
\end{equation}
whenever this integral converges.

We use the following simple properties of coherent states; see also \cite[Chapter 12]{LiLo}.

\begin{lemma}
	\label{cslowersymb}
	Let $p,q\in\R^d$. Then $G_{p,q}\in L^2(\R^d)$ with
	\begin{equation}
		\label{eq:cslowernorm}
		\|G_{p,q}\| = 1 \,.
	\end{equation}
	If $g\in L^\infty(\R^d)$ with compact support and $V\in L^1_\loc(\R^d)$, then
	\begin{equation}
		\label{eq:cslowerpot}
		\int_{\R^d} V(x) |G_{p,q}(x)|^2 \,dx = \langle V \rangle (q) \,.
	\end{equation}
	If $g\in H^1(\R^d)$, then
	\begin{equation}
		\label{eq:cslowerkin}
		\| \nabla G_{p,q}\|^2 = p^2 + \|\nabla g\|^2 \,.
	\end{equation}
\end{lemma}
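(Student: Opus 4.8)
The plan is to verify the three identities directly from the definition \eqref{eq:cohst}, exploiting that multiplication by the unimodular factor $e^{ip\cdot x}$ does not change absolute values and that translation is an isometry on all the relevant spaces. For \eqref{eq:cslowernorm}, I would write $|G_{p,q}(x)|^2 = |g(x-q)|^2$, so that $\|G_{p,q}\|^2 = \int_{\R^d} |g(x-q)|^2\,dx = \int_{\R^d} |g(y)|^2\,dy = \|g\|^2 = 1$ after the substitution $y = x-q$; in particular this also shows $G_{p,q}\in L^2(\R^d)$ since $g\in L^2(\R^d)$. For \eqref{eq:cslowerpot}, the same observation gives $\int_{\R^d} V(x)|G_{p,q}(x)|^2\,dx = \int_{\R^d} V(x) g(x-q)^2\,dx$, which is exactly $\langle V\rangle(q)$ by the definition \eqref{eq:meanv}; here the hypotheses $g\in L^\infty$ with compact support and $V\in L^1_\loc(\R^d)$ are what guarantee the integral converges absolutely (the integrand is supported in the compact set $q+\supp g$ on which $V$ is integrable and $g^2$ is bounded), so there is no issue with the identity being meaningful.

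For the gradient identity \eqref{eq:cslowerkin}, I would first note that $g\in H^1(\R^d)$ implies $G_{p,q}\in H^1(\R^d)$, with
$$
\nabla G_{p,q}(x) = i p\, e^{ip\cdot x} g(x-q) + e^{ip\cdot x} (\nabla g)(x-q)
$$
by the product rule. Taking the squared modulus pointwise and using $|e^{ip\cdot x}| = 1$,
$$
|\nabla G_{p,q}(x)|^2 = |p|^2 |g(x-q)|^2 + 2\,\re\big( \overline{i p g(x-q)} \cdot (\nabla g)(x-q)\big) + |(\nabla g)(x-q)|^2 .
$$
Here I am using the paper's convention $p^2 = |p|^2$ (and $\overline{ip} = -ip$, so the cross term is $2\,\re(-ip\,\overline{g}\,\nabla g) = 2\, p\cdot \im(\overline{g}\,\nabla g) = p\cdot \im(\overline{g}\,\nabla g - g\,\overline{\nabla g})$, which integrates to zero). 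Integrating over $x$ and substituting $y = x-q$: the first term gives $|p|^2\|g\|^2 = p^2$, the last term gives $\|\nabla g\|^2$, and the cross term is $\int_{\R^d} p\cdot \im\big(\overline{g(y)}\,\nabla g(y) - g(y)\,\overline{\nabla g(y)}\big)\,dy$. Since $g$ is real-valued, this cross term vanishes identically pointwise (indeed $\overline{g}\,\nabla g$ is then real), giving $\|\nabla G_{p,q}\|^2 = p^2 + \|\nabla g\|^2$ as claimed.

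None of the three steps presents a genuine obstacle; the only point requiring a modicum of care is the vanishing of the cross term in \eqref{eq:cslowerkin}, which is where the hypothesis that $g$ is real-valued enters essentially. (Without it, one would instead pick up a term linear in $p$, namely $2p\cdot\int \im(\overline g\,\nabla g)$, which need not vanish in general.) I would also remark in passing that \eqref{eq:cslowerpot} continues to hold, with the same one-line proof, whenever the integral $\langle V\rangle(q)$ converges absolutely, so the stated hypotheses on $g$ and $V$ are merely a convenient sufficient condition and not the sharpest possible.
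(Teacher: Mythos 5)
Your verification is correct and is the direct computation one would expect; the paper itself states this lemma without proof (referring to \cite[Chapter 12]{LiLo} for background on coherent states), so there is no argument in the source to compare against. The one wrinkle is the parenthetical claim that the cross term ``integrates to zero'': for general complex-valued $g\in H^1$ this is false (take $d=1$ and $g(x)=e^{ix}\phi(x)$ with $\phi$ a real normalized bump, for which $\int \im(\overline g\, g')\,dx = \int\phi^2\,dx = 1$), and indeed you contradict this claim yourself in the closing remark. What actually saves the computation is exactly what you say in the main line of the argument: $g$ is assumed real in the setup of \eqref{eq:cohst}, so $\overline g\,\nabla g$ is real and the cross term vanishes pointwise before any integration. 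It would be cleaner to delete the ``integrates to zero'' parenthetical and rely solely on the pointwise cancellation. Everything else --- the use of $|e^{ip\cdot x}|=1$ and translation invariance for \eqref{eq:cslowernorm} and \eqref{eq:cslowerpot}, and the observation that compact support of $g$ together with $V\in L^1_\loc$ makes the integral in \eqref{eq:cslowerpot} absolutely convergent --- is correct and complete.
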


\begin{lemma}
	\label{csuppersymb}
	Let $f\in L^2(\R^d)$. Then
	\begin{equation}
		\label{eq:csidentity}
		\iint_{\R^d\times\R^d} \left| (G_{p,q},f)\right|^2 \,\frac{dp\,dq}{(2\pi)^d} = \|f\|^2 \,.
	\end{equation}
	Moreover,
	\begin{equation}
		\label{eq:csupperpot}
		\iint_{\R^d\times\R^d} V(q) \left| (G_{p,q},f)\right|^2 \,\frac{dp\,dq}{(2\pi)^d} = \int_{\R^d} (V*|g|^2)(x) |f(x)|^2 \,dx \,.
	\end{equation}
	If, in addition, $g\in H^1(\R^d)$ is real-valued and $f\in H^1(\R^d)$, then
	\begin{equation}
		\label{eq:csupperkin}
		\iint_{\R^d\times\R^d} p^2 \left| (G_{p,q},f)\right|^2 \,\frac{dp\,dq}{(2\pi)^d} = \|\nabla f\|^2 + \|\nabla g\|^2 \|f\|^2 \,.
	\end{equation}
\end{lemma}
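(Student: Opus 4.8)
\emph{Proof proposal.} The starting point is that for fixed $q\in\R^d$ the map $p\mapsto (G_{p,q},f)$ is, up to a harmless conjugation, a Fourier transform. Writing $(G_{p,q},f)=\int_{\R^d} \overline{G_{p,q}(x)}\,f(x)\,dx=\int_{\R^d} e^{-ip\cdot x} g(x-q)\,f(x)\,dx$ (recall $g$ is real), we have $(G_{p,q},f)=\widehat{h_q}(p)$ with $h_q(x):=g(x-q)\,f(x)$ and $\widehat u(p):=\int e^{-ip\cdot x}u(x)\,dx$. Since $g(\cdot-q)$ and $f$ both lie in $L^2(\R^d)$, Cauchy--Schwarz gives $h_q\in L^1(\R^d)$, so this is well defined. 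I would first record the pointwise-in-$q$ Plancherel identity
\[
\int_{\R^d} |(G_{p,q},f)|^2\,\frac{dp}{(2\pi)^d}=\int_{\R^d} g(x-q)^2\,|f(x)|^2\,dx \,,
\]
valid in $[0,\infty]$ for every $q$, using the standard extension of Plancherel's theorem to $L^1$ (both sides are infinite simultaneously; they are finite precisely when $h_q\in L^2$).

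To obtain \eqref{eq:csidentity} I integrate the last display over $q$ and apply Tonelli's theorem to the nonnegative integrand $g(x-q)^2|f(x)|^2$, getting
\[
\iint_{\R^d\times\R^d} |(G_{p,q},f)|^2\,\frac{dp\,dq}{(2\pi)^d}=\int_{\R^d} |f(x)|^2\left(\int_{\R^d} g(x-q)^2\,dq\right)dx=\|g\|^2\,\|f\|^2=\|f\|^2 \,.
\]
For \eqref{eq:csupperpot} the same computation with the extra weight $V(q)$ and the substitution $y=x-q$ in the inner integral gives
\[
\iint_{\R^d\times\R^d} V(q)\,|(G_{p,q},f)|^2\,\frac{dp\,dq}{(2\pi)^d}=\int_{\R^d} |f(x)|^2\left(\int_{\R^d} V(x-y)\,g(y)^2\,dy\right)dx=\int_{\R^d} (V*|g|^2)(x)\,|f(x)|^2\,dx \,,
\]
where Fubini is legitimate since the same computation with $|V|$ in place of $V$ produces $\int(|V|*|g|^2)|f|^2\,dx$, which is finite whenever the right-hand side is meaningful (and for $V\ge 0$ the identity holds in $[0,\infty]$ unconditionally).

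For \eqref{eq:csupperkin} I now use $g,f\in H^1(\R^d)$ with $g$ real. Then $h_q\in W^{1,1}(\R^d)$ with $\nabla h_q=(\nabla g)(\cdot-q)\,f+g(\cdot-q)\,\nabla f$ (product rule; both terms are in $L^1$ by Cauchy--Schwarz), so $ip\,\widehat{h_q}(p)=\widehat{\nabla h_q}(p)$ and, again by the extended Plancherel theorem applied componentwise,
\[
\int_{\R^d} p^2\,|(G_{p,q},f)|^2\,\frac{dp}{(2\pi)^d}=\int_{\R^d}\bigl|(\nabla g)(x-q)\,f(x)+g(x-q)\,\nabla f(x)\bigr|^2\,dx \,.
\]
Expanding the square and integrating over $q$ (Fubini being justified by the bounds $\|g\,|\nabla g|\,\|_1\le\|g\|_2\|\nabla g\|_2$ and $\|\,|f|\,|\nabla f|\,\|_1\le\|f\|_2\|\nabla f\|_2$), the $|(\nabla g)(\cdot-q)|^2|f|^2$ term yields $\|\nabla g\|^2\|f\|^2$, the $g(\cdot-q)^2|\nabla f|^2$ term yields $\|g\|^2\|\nabla f\|^2=\|\nabla f\|^2$, and the cross term is
\[
\int_{\R^d} 2\,\re\!\bigl(\overline{f(x)}\,\nabla f(x)\bigr)\cdot\left(\int_{\R^d} g(x-q)\,(\nabla g)(x-q)\,dq\right)dx \,,
\]
whose inner integral equals $\tfrac12\int_{\R^d}\nabla(g^2)(y)\,dy=0$ because $g^2\in W^{1,1}(\R^d)$ (indeed $g^2\in L^1$ and $\nabla(g^2)=2g\nabla g\in L^1$). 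This proves \eqref{eq:csupperkin}; the finiteness of the end result confirms a posteriori that $h_q\in H^1$ for a.e.\ $q$.

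The computations are routine once the Fourier-transform interpretation of $(G_{p,q},f)$ is in place; the only genuinely non-automatic point is the vanishing of the cross term in \eqref{eq:csupperkin}, which is precisely where the hypothesis that $g$ is real (through $2g\nabla g=\nabla(g^2)$) and the $H^1$ assumptions are used.
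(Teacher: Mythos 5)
Your proof is correct, and since the paper states Lemma~\ref{csuppersymb} without proof (referring to \cite[Chapter 12]{LiLo}), there is no in-text argument to compare against; your Fourier-transform interpretation of $p\mapsto(G_{p,q},f)$, pointwise-in-$q$ Plancherel, and Tonelli/Fubini in $q$ is exactly the standard route taken in that reference. One small remark: the vanishing of the cross term in \eqref{eq:csupperkin} can be seen even more directly from the inner Plancherel identity itself — writing $\int p^2\,|\widehat{h_q}(p)|^2\,\tfrac{dp}{(2\pi)^d}=\int|\nabla h_q|^2\,dx$ and then integrating $|\nabla h_q(x)|^2$ over $q$ first, the cross term reduces to $\int 2\,\re(\overline f\,\nabla f)\cdot\bigl(\int g\,\nabla g\,dy\bigr)\,dx$, and $\int g\,\nabla g\,dy=\tfrac12\int\nabla(g^2)\,dy=0$ because $g^2\in W^{1,1}(\R^d)$ (note $g^2\in L^1$ and $\nabla(g^2)=2g\nabla g\in L^1$ by Cauchy--Schwarz for $g\in H^1$), and $C_c^\infty$ is dense in $W^{1,1}(\R^d)$. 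This is precisely what you did, and the hypotheses of the lemma ($g$ real, $g,f\in H^1$) are used exactly where you say they are.
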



\subsection{Lower bound on $\Tr(-h^2\Delta_\Omega+V)_-$}\label{sec:upper}

Let $\omega\subset\R^d$ be a bounded, open set with $\overline\omega\subset\Omega$. Let $g\in H^1(\R^d)\cap L^\infty(\R^d)$ be a real function with $\|g\|_2=1$ and support contained in the open ball centered at the origin of radius $\dist(\omega,\R^d\setminus\Omega)>0$. We consider the coherent states $G_{p,q}$ corresponding to this function $g$ as defined in Subsection \ref{sec:cs}. With a parameter $M>0$ and recalling the definition of $V_M$ in \eqref{eq:defvm}, we choose the following trial matrix in the variational principle for the sum of eigenvalues,
$$
\Gamma := \iint_{\R^d\times\omega} \1_{\{h^2p^2 + V_M(q)<0\}} |G_{p,q}\rangle\langle G_{p,q}| \,\frac{dp\,dq}{(2\pi)^d} \,.
$$
Clearly, $0\leq\Gamma\leq 1$ and the range of $\Gamma$ lies in $H^1_0(\Omega)\cap L^2(\Omega,V_+\,dx)$. Thus, by the variational principle and using the computations from Lemma \ref{cslowersymb} (note that these computations are applicable since $V_+$ is multiplied by functions supported on a compact subset of $\Omega$),
\begin{align*}
	& -\tr\left( -h^2\Delta_\Omega +V\right)_- \leq \tr \left( -h^2\Delta_\Omega + V \right)\Gamma \\
	& \quad = \iint_{\R^d\times\omega} \1_{\{h^2p^2 + V_M(q)<0\}} \left( \int_{\R^d} \left( h^2 |\nabla G_{p,q}|^2 + V |G_{p,q}|^2 \right) dx \right) \frac{dp\,dq}{(2\pi)^d} \\
	& \quad = \iint_{\R^d\times\omega} \1_{\{h^2p^2 + V_M(q)<0\}} \left( h^2 p^2 + h^2 \|\nabla g\|_2^2 + \langle V \rangle(q) \right) \frac{dp\,dq}{(2\pi)^d} \\
	& \quad = \iint_{\R^d\times\omega} \1_{\{h^2p^2 + V_M(q)<0\}} \left( h^2p^2 + V_M(q) \right) \frac{dp\,dq}{(2\pi)^d} + h^{-d} \mathcal R \\
	& \quad = - h^{-d} L_{1,d}^\cl \int_{\omega} V_M(q)_-^{1+d/2} \,dq +h^{-d} \mathcal R \,.
\end{align*}
Here we have set
\begin{align*}
	\mathcal R & := h^d \iint_{\R^d\times\omega} \1_{\{h^2p^2 + V_M(q)<0\}} \left( h^2 \|\nabla g\|_2^2 + \langle V \rangle(q) - V_M(q) \right) \frac{dp\,dq}{(2\pi)^d} \\
	& = L_{0,d}^\cl \int_\omega V_M(q)_-^{d/2} \left( h^2 \|\nabla g\|_2^2 + \langle V \rangle(q) - V_M(q) \right) dq \,.
\end{align*}
It is convenient to decompose this remainder term further,
$$
\mathcal R = \mathcal R_1 + \mathcal R_2 \,,
$$
where
\begin{align*}
	\mathcal R_1 & := h^{2} L_{0,d}^\cl \|\nabla g\|_2^2 \int_{\omega} V_M(q)_-^{d/2} \,dq \,,\\
	\mathcal R_2 & := L_{0,d}^\cl \int_\omega V_M(q)_-^{d/2} \left( \langle V \rangle(q) - V_M(q) \right) dq \,.
\end{align*}
The integral in $\mathcal R_1$ is finite since $(V_M)_-\leq V_-\in L_\loc^{d/2}(\Omega)$. Thus, we obtain
$$
\liminf_{h\to 0_+} h^d \tr\left( -h^2\Delta_\Omega +V\right)_- \geq L_{1,d}^\cl \int_{\omega} V_M(q)_-^{1+d/2} \,dq - \mathcal R_2 \,.
$$

We show now that by a suitable choice of $g$ (with $\omega$ and $M$ fixed), we can make $\mathcal R_2$ smaller than any given positive constant. Once we have shown this, we obtain
$$
\liminf_{h\to 0_+} h^d \tr\left( -h^2\Delta_\Omega +V\right)_- \geq L_{1,d}^\cl \int_{\omega} V_M(q)_-^{1+d/2} \,dq \,,
$$
and then we can let $M\to+\infty$ and take a sequence of $\omega$'s that increase to $\Omega$ to obtain the claimed bound.

To prove the statement about $\mathcal R_2$, we decompose it further,
$$
\mathcal R_2 = \mathcal R_2' + \mathcal R_2'' + \mathcal R_2''' \,,
$$
where
\begin{align*}
	\mathcal R_2' & := - L_{0,d}^\cl \int_\omega V_M(q)_-^{d/2} \left( \langle V_- \rangle(q) - V(q)_- \right) dq \,, \\
	\mathcal R_2'' & := - L_{0,d}^\cl \int_\omega V_M(q)_-^{d/2} \left( V_M(q) + V(q)_- \right)dq \,,\\
	\mathcal R_2''' & := L_{0,d}^\cl \int_\omega V_M(q)_-^{d/2} \langle V_+ \rangle(q)\,dq \,.
\end{align*}
Since $V_M + V_- \geq 0$, we have $\mathcal R_2''\leq 0$ and we can drop it from the further discussion. We bound
$$
\left| \mathcal R_2' \right| \leq L_{0,d}^\cl \left\| (V_M)_- \right\|_{1+d/2}^{d/2} \left\| \langle V_- \rangle - V_- \right\|_{1+d/2} \leq L_{0,d}^\cl \left\| V_- \right\|_{1+d/2}^{d/2} \left\| \langle V_- \rangle - V_- \right\|_{1+d/2} \,.
$$
We now choose $g$ of the form $g_\delta(x) := \delta^{-d/2} G(x/\delta)$, where $G\in H^1(\R^d)\cap L^\infty(\R^d)$ is a real function with $\|G\|=1$ and support in the closed unit ball. We will assume that $0<\delta\leq (1/2)\dist(\omega,\R^d\setminus\Omega)$, so that $g_\delta$ satisfies all the assumptions on $g$ above. For emphasis, we will use the notation $\langle \cdot \rangle_\delta$. Since $V_-\in L^{1+d/2}(\R^d)$ (we extend $V$ by zero to $\R^d\setminus\Omega$), it follows from standard properties of convolutions that $\langle V_- \rangle_\delta \to V_-$ in $L^{1+d/2}(\R^d)$. Thus, $\lim_{\delta\to 0} \mathcal R_2'=0$.

Note that the function $\langle V_+ \rangle_\delta$ on $\omega$ only depends on $V_+$ restricted to a neighborhood of $\omega$ of size $(1/2)\dist(\omega,\R^d\setminus\Omega)$. Since $V_+$ restricted to this neighborhood is integrable, it follows from standard properties of convolutions that $\langle V_+ \rangle_\delta\to V_+$ in $L^1(\omega)$. Since $(V_M)_-^{d/2} \leq M^{d/2} \in L^\infty(\omega)$, it follows that $\lim_{\delta\to 0} \mathcal R_2'''= L_{0,d}^\cl \int_\omega V_M(q)^{d/2} V_+(q)\,dq = 0$.

Thus, we have shown that $\limsup_{\delta\to 0_+} \mathcal R_2 \leq 0$, which, as we explained, completes the proof.


\subsection{Upper bound on $\Tr(-h^2\Delta_\Omega+V)_-$}\label{sec:lower}

By the variational characterization of sums of negative eigenvalues, we need to prove a lower bound on $\tr(-h^2\Delta_\Omega +V)\Gamma$ for any operator $0\leq\Gamma\leq 1$ on $L^2(\Omega)$ with finite energy. Since this quantity does not increase if we replace $V$ by $-V_-$, we may assume from the outset that $V_+\equiv 0$.

We will denote by $\rho_\Gamma$ the density of a bounded, nonnegative operator $\Gamma$ on $L^2(\Omega)$ with finite kinetic energy $\tr(-\Delta_\Omega)\Gamma$. In the special case where $\Gamma$ is trace class (which is all that is needed for us) with Schmidt decomposition $\Gamma=\sum_n \nu_n |\psi_n\rangle\langle \psi_n|$, one has $\rho_\Gamma=\sum_n \nu_n |\psi_n|^2$. One easily verifies that, in the presence of degenerate eigenvalues, this is independent of the choice of the basis of eigenfunctions.

In order to better explain the strategy of the proof, we first assume \emph{that $\Omega$ has finite measure}. Because of the upper bound that we have already proved we may assume, in addition, that $\tr(-h^2\Delta+V)\Gamma\leq 0$. Let $g\in H^1(\R^d)$ be real and consider again the coherent states $(G_{p,q})$ from Section \ref{sec:cs}. According to Lemma \ref{csuppersymb}, we have
\begin{align*}
	\tr(-h^2\Delta+V)\Gamma = \iint_{\R^d\times\R^d} \left( h^2 p^2 + V(q) \right) (G_{p,q},\Gamma\, G_{p,q}) \, \frac{dp\,dq}{(2\pi)^d} + \mathcal R_1 + \mathcal R_2
\end{align*}
with
\begin{align*}
	\mathcal R_1 & := -h^2 \|\nabla g\|^2 \tr\Gamma = -h^2 \|\nabla g\|^2 \int_{\Omega} \rho_\Gamma \,dx \,, \\
	\mathcal R_2 & := \int_{\R^d} \left( V - V*|g|^2 \right) \rho_\Gamma \,dx \,.
\end{align*}
Since $0\leq\Gamma\leq 1$, we have $0\leq (G_{p,q},\Gamma \, G_{p,q}) \leq \|G_{p,q}\|^2 = 1$ by \eqref{eq:cslowernorm}. This implies
\begin{align*}
	\iint_{\R^d\times\R^d} \left( h^2 p^2 + V(q) \right) (G_{p,q},\Gamma \, G_{p,q}) \, \frac{dp\,dq}{(2\pi)^d} & \geq - \iint_{\R^d\times\R^d} \left( h^2 p^2 + V(q) \right)_- \, \frac{dp\,dq}{(2\pi)^d} \\
	& = - h^{-d} L_{1,d}^\cl \int_{\Omega} V_-^{1+d/2} \,dq \,.
\end{align*}
Thus, to complete the proof we have to show that $\mathcal R_1$ and $\mathcal R_2$ are small compared with this term. It is at this point that the Lieb--Thirring inequality enters in a crucial way. Indeed, combining this inequality with our a-priori assumption $\tr(-h^2\Delta_\Omega+V)\Gamma\leq 0$ and H\"older's inequality, we obtain
$$
h^2 K_d \int_{\R^d} \rho_\Gamma^{1+2/d} \,dx
\leq \tr(-h^2\Delta) \Gamma \leq - \int_{\R^d} V \rho_\Gamma \,dx \leq \|V_-\|_{1+d/2} \|\rho_\Gamma\|_{1+2/d} \,,
$$
that is,
$$
\int_{\R^d} \rho_\Gamma^{1+2/d} \,dx \leq h^{-d-2} K_d^{-(d+2)/2} \int_{\R^d} V_-^{1+d/2} \,dx \,.
$$
For the Lieb--Thirring inequality, see, e.g., \cite[Remark 7.21]{FrLaWe}. Note that the extension of an operator $\Gamma$ on $L^2(\Omega)$ with $\tr(-\Delta_\Omega)\Gamma<\infty$ by zero to an operator on $L^2(\R^d)$ satisfies $\tr(-\Delta)\Gamma=\tr(-\Delta_\Omega)\Gamma$.

We now use this a-priori bound to estimate our remainder terms. We obtain
$$
\mathcal R_1 \geq - h^2  \|\nabla g\|^2 |\Omega|^{2/(d+2)} \|\rho_\Gamma\|_{1+2/d} \geq - h^{-d+2} K_d ^{-d/2} \|\nabla g\|^2 |\Omega|^{2/(d+2)} \|V_-\|_{1+d/2}^{d/2}
$$
(here we use $|\Omega|<\infty$) and
$$
|\mathcal R_2| \leq \|V*|g|^2-V\|_{1+d/2} \|\rho_\Gamma\|_{1+2/d} \leq h^{-d} K_d ^{-d/2}  \|V*|g|^2-V\|_{1+d/2} \|V_-\|_{1+d/2}^{d/2} \,.
$$
Note that these bounds are independent of $\Gamma$. Thus, we obtain
\begin{align*}
	\limsup_{h\to 0_+} h^{-d} \tr\left( -h^2\Delta_\Omega +V\right)_- 
	& \leq  L_{1,d}^\cl \int_{\Omega} V(q)_-^{1+d/2} \,dq \\
	& \quad + K_d ^{-d/2}  \|V*|g|^2-V\|_{1+d/2} \|V_-\|_{1+d/2}^{d/2} \,.
\end{align*}
This bound is of the same form as in the upper bound in the first part of the proof. Therefore, by the same argument (involving a $g$ depending on a small parameter $\delta$), we finally obtain
\begin{align*}
	\limsup_{h\to 0_+} h^{-d} \tr\left( -h^2\Delta_\Omega +V\right)_- 
	\leq L_{1,d}^\cl \int_{\Omega} V(q)_-^{1+d/2} \,dq \,,
\end{align*}
which completes the proof of the lemma in the case $|\Omega|<\infty$.

Finally, we extend the result to arbitrary open $\Omega$. Given $R_->0$, we choose two smooth, real-valued functions $\chi_<$ and $\chi_>$ on $\R^d$ such that $\chi_<^2+\chi_>^2\equiv 1$, such that $\chi_<$ has compact support and such that $\chi_<\equiv 1$ on $B_{R_-}$. Consider again $0\leq\Gamma\leq 1$ with $\tr(-h^2\Delta_\Omega+V)\Gamma\leq 0$. Then, by the IMS localization formula,
\begin{align*}
	\tr(-h^2\Delta_\Omega+V)\Gamma = & \tr\left(-h^2\Delta_\Omega+V_h\right)\chi_<\Gamma\chi_< + \tr\left(-h^2\Delta_\Omega+V_h \right)\chi_>\Gamma\chi_>
\end{align*}
with
\begin{equation}
	\label{eq:imspot}
	V_h := V-h^2\left(|\nabla\chi_<|^2+|\nabla\chi_>|^2\right).
\end{equation}

We shall prove lower bounds on both terms on the right side separately.
Let $R_+>0$ be such that the support of $\chi_<$ is contained in the ball $B_{R_+}$ and put $\omega :=\Omega\cap B_{R_+}$. Then, by the variational principle, since $0\leq\chi_<\Gamma\chi_<\leq 1$,
\begin{align*}
	\tr\left(-h^2\Delta_\Omega+V_h \right)\chi_<\Gamma\chi_< \geq - \tr\left(-h^2\Delta_\omega+\1_\omega V_h \right)_- \,.
\end{align*}
We claim that
$$
\limsup_{h\to 0_+} h^d \tr\left(-h^2\Delta_\omega+\1_\omega V_h \right)_- \leq L_{1,d}^\cl \int_\Omega V_-^{1+d/2} \,dx \,.
$$
Indeed, this follows from the lower bound for sets of finite measure, even with the integral on the right side restricted to $\omega$. (To get rid of the $h$-dependence, fix $h_0>0$ and bound $V_h \geq V_{h_0}$ for $h\leq h_0$. Then let first $h\to 0_+$ using the result for sets of finite measure and then let $h_0\to 0_+$.)

Therefore, it remains to treat the `outside' contribution. According to the Lieb--Thirring inequality (see, e.g., \cite[Theorem 4.38]{FrLaWe}),
\begin{align*}
	\tr\left(-h^2\Delta_\Omega+V_h \right)\chi_>\Gamma\chi_> & \geq - \tr(-h^2\Delta_\Omega + \1_{\Omega\cap B_{R_-}^c} V_h)_- \\
	& \geq - h^{-d} L_{1,d} \int_{\Omega\cap B_{R_-}^c} \left(V_h \right)_-^{1+d/2} \,dx \,.
\end{align*}
Note that this is a lower bound independent of $\Gamma$. Finally, we notice that
$$
\lim_{h\to 0_+} \int_{\Omega\cap B_{R_-}^c} \left(V_h \right)_-^{1+d/2} \,dx = \int_{\Omega\cap B_{R_-}^c} V_-^{1+d/2} \,dx \,,
$$
which, by monotone convergence, can be made arbitrarily small by choosing $R_-$ large. This completes the proof.


\section{The case $\gamma>1$}\label{sec:>1}

Theorem \ref{main} for $\gamma>1$ follows in a relatively straightforward way from the case $\gamma=1$ of Theorem \ref{main} and the Lieb--Thirring inequality. Here are the details.

The starting point is the formula, valid for $\gamma>1$,
\begin{equation}
	\label{eq:representation}
	\Tr \left( -h^2\Delta_\Omega + V \right)_-^\gamma
	= \gamma(\gamma-1) \int_0^\infty \Tr \left( -h^2\Delta_\Omega + V + \kappa \right)_- \kappa^{\gamma-2}\,d\kappa \,.
\end{equation}
For $\kappa>0$ we have $(V+\kappa)_+\in L^1_\loc(\Omega)$ and, since $(V+\kappa)_-^{1+d/2}\lesssim \kappa^{-1+\gamma} V_-^{\gamma+d/2}$, $(V+\kappa)_-\in L^{1+d/2}(\Omega)$. Thus, by Theorem \ref{main} with $\gamma=1$, we have, for any $\kappa>0$,
$$
\lim_{h\to 0_+} h^d  \Tr \left( -h^2\Delta_\Omega + V + \kappa \right)_- = L_{\gamma,d}^\cl \int_\Omega (V+\kappa)_-^{1+d/2}\,dx \,.
$$
Moreover, by the Lieb--Thirring inequality (see, e.g., \cite[Theorem 4.38]{FrLaWe}), we have
$$
h^d  \Tr \left( -h^2\Delta_\Omega + V + \kappa \right)_- \lesssim \int_\Omega (V+\kappa)_-^{1+d/2}\,dx \,.
$$
Thus, by dominated convergence,
$$
\lim_{h\to 0_+} h^d \Tr \left( -h^2\Delta_\Omega + V \right)_-^\gamma
= \gamma(\gamma-1) \int_0^\infty L_{1,d}^\cl \int_\Omega (V+\kappa)_-^{1+d/2}\,dx\, \kappa^{\gamma-2}\,d\kappa \,.
$$
Since
\begin{equation}
	\label{eq:scconsts}
	\gamma(\gamma-1) \int_0^\infty L_{1,d}^\cl (V+\kappa)_-^{1+d/2} \kappa^{\gamma-2}\,d\kappa = L_{\gamma,d}^\cl V_-^{\gamma+d/2} \,,
\end{equation}
this implies the claimed asymptotics for $\gamma>1$.


\section{The case $\gamma<1$}\label{sec:<1}

Our goal in this section is to prove Theorem \ref{main} in the remaining case $0\leq\gamma<1$.


\subsection{A weak convergence result}\label{sec:weakconv}

The following assertion is a tool in the proof of Theorem \ref{main}, which is of independent interest. We recall that we introduced the density $\rho_D$ of a trace class operator $D$ in Subsection \ref{sec:lower} above.

\begin{theorem}\label{weylschr}
	Let $\gamma\geq 1$, let $\Omega\subset\R^d$ be an open set and let $V\in L^1_\loc(\Omega)$ with	$V_-\in L^{1+d/2}(\Omega)$. Then $D_h :=(-h^2\Delta+V)_-^{\gamma-1}$ satisfies, as $h\to 0$,
	\begin{equation}
		\label{eq:weylschrptw}
		h^d \rho_{D_h} \rightharpoonup L_{\gamma-1,d}^\cl \,V_-^{\gamma+d/2-1}
		\qquad\text{in}\ L^{(\gamma+d/2)'}(\Omega) \,.
	\end{equation}
\end{theorem}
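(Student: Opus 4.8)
The plan is to establish \eqref{eq:weylschrptw} by testing against a function $W\in L^{\gamma+d/2}(\Omega)$ and reducing the resulting integral to a trace that can be controlled by the $\gamma=1$ case of Theorem~\ref{main}, suitably perturbed. Concretely, for $W\in L^{\gamma+d/2}(\Omega)$ one has $\int_\Omega W\,\rho_{D_h}\,dx = \Tr\big(W\,(-h^2\Delta+V)_-^{\gamma-1}\big)$, and the key observation is that $t\mapsto t_-^{\gamma-1}$ is, up to constants, the derivative in $\kappa$ (or a superposition over $\kappa$) of $t\mapsto (t+\kappa)_-$-type quantities; more precisely I would use the identity $t_-^{\gamma-1} = c_\gamma \int_0^\infty \1_{\{t<-\kappa\}}\kappa^{\gamma-2}\,d\kappa$ for $\gamma>1$ (with the obvious modification $t_-^0=\1_{\{t<0\}}$ for $\gamma=1$). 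Pairing this with $W$ and using the layer-cake / spectral representation, $\int_\Omega W\,\rho_{D_h}\,dx$ becomes a superposition in $\kappa$ of expressions $\Tr\big(W\,\1_{\{-h^2\Delta+V<-\kappa\}}\big)$, which in turn I would like to differentiate out of traces of Riesz means of the \emph{shifted and tilted} operator $-h^2\Delta + V + \kappa \pm \epsilon W$.

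The main step is therefore the following: for fixed small $\epsilon$ of either sign, apply the already-proven $\gamma=1$ case of Theorem~\ref{main} (in the form of Remark (b) for the liminf direction, and the full statement after checking the hypotheses) to the potential $V_\epsilon := V + \epsilon W$, noting $(V_\epsilon)_+\in L^1_\loc(\Omega)$ and $(V_\epsilon)_-\in L^{1+d/2}(\Omega)$ provided $W\in L^{1+d/2}$, which holds since $W\in L^{\gamma+d/2}$ and we may first reduce to bounded compactly supported $W$ by density (using the uniform $L^{(\gamma+d/2)'}$ bound on $h^d\rho_{D_h}$ that follows from the Lieb--Thirring inequality, exactly as in Section~\ref{sec:>1}). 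From
$$
\lim_{h\to 0_+} h^d \Tr(-h^2\Delta_\Omega + V + \epsilon W)_-^\gamma = L_{\gamma,d}^\cl \int_\Omega (V+\epsilon W)_-^{\gamma+d/2}\,dx
$$
one reads off, by a convexity/monotonicity-in-$\epsilon$ argument and the variational (Birman--Schwinger type) formula $\frac{d}{d\epsilon}\Tr(-h^2\Delta+V+\epsilon W)_-^\gamma = -\gamma\,\Tr\big(W\,(-h^2\Delta+V+\epsilon W)_-^{\gamma-1}\big)$, that the difference quotients in $\epsilon$ on the left converge to those of the explicit right-hand side. Passing $\epsilon\to 0$ and using that $\epsilon\mapsto \int_\Omega(V+\epsilon W)_-^{\gamma+d/2}$ is differentiable at $\epsilon=0$ with derivative $-\big(\gamma+\tfrac d2\big)\int_\Omega W\,V_-^{\gamma+d/2-1}\,dx$ (dominated convergence, since $|W|\,V_-^{\gamma+d/2-1}\in L^1$ by H\"older), and that $\gamma L_{\gamma,d}^\cl\big/(\gamma+\tfrac d2) \cdot(\cdots)$ matches $L_{\gamma-1,d}^\cl$ after the constant identity $(\gamma+d/2)L_{\gamma,d}^\cl = \gamma L_{\gamma-1,d}^\cl$, yields $h^d\int_\Omega W\rho_{D_h}\,dx \to L_{\gamma-1,d}^\cl\int_\Omega W\,V_-^{\gamma+d/2-1}\,dx$, which is the claimed weak convergence on a dense set, hence on all of $L^{\gamma+d/2}$ by the uniform bound.

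The hard part will be justifying the interchange of the $h\to 0$ limit with the $\epsilon$-differentiation (equivalently, with the $\kappa$-integration in the layer-cake representation): one needs convergence of the difference quotients $\epsilon^{-1}[\,h^d\Tr(-h^2\Delta+V+\epsilon W)_-^\gamma - h^d\Tr(-h^2\Delta+V)_-^\gamma\,]$ to be uniform enough, or monotone enough, in $h$. Here I would exploit convexity of $\epsilon\mapsto \Tr(-h^2\Delta+V+\epsilon W)_-^\gamma$ (valid for $\gamma\geq 1$ since $t\mapsto t_-^\gamma$ is convex and the map is then a supremum of affine functions of the operator via the variational principle): a convex function's difference quotients are monotone in the step size, so the limit $h\to 0$ of the convex functions, being convex, has one-sided derivatives squeezed between the limits of the difference quotients, and these can be pinched by taking $\pm\epsilon$ and $\epsilon\to 0$. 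This sidesteps any need for uniform differentiability in $h$. A secondary technical point is the reduction to nice $W$ and the verification that $D_h$ is genuinely trace class with a well-defined density $\rho_{D_h}\in L^{(\gamma+d/2)'}$, both of which follow from Lieb--Thirring and Sobolev as in the previous sections. For $\gamma=1$ the statement degenerates to $h^d\rho_{D_h}=h^d\1_{\{-h^2\Delta+V<0\}}$-density $\rightharpoonup L_{0,d}^\cl V_-^{d/2}$, and the same scheme applies with $t_-^0 = \1_{\{t<0\}}$ in place of the $\kappa$-integral.
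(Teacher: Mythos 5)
Your proposal is correct and takes essentially the same route as the paper: test against a function in $L^{\gamma+d/2}(\Omega)$, perturb the potential to $V\pm\epsilon U$, apply the already-established asymptotics \eqref{eq:main} for $\gamma\geq1$ to sandwich $h^d\Tr(UD_h)$ between difference quotients of $\epsilon\mapsto L^\cl_{\gamma,d}\int(V+\epsilon U)_-^{\gamma+d/2}\,dx$, and let $\epsilon\to0_+$ using the identity $(\gamma+d/2)L^\cl_{\gamma,d}=\gamma L^\cl_{\gamma-1,d}$. The only difference is how the sandwich is produced before passing to the limit: where you appeal to trace convexity of $\epsilon\mapsto\Tr(-h^2\Delta_\Omega+V+\epsilon U)_-^\gamma$ together with the monotonicity of difference quotients of convex functions, the paper derives the same two-sided bound explicitly from the identity $\lambda\Tr(UD_h)=\Tr(-h^2\Delta_\Omega+V)_-^\gamma+\Tr\bigl((-h^2\Delta_\Omega+V+\lambda U)D_h\bigr)$ combined with H\"older's inequality for Schatten norms and the observation $\Tr D_h^{\gamma'}=\Tr(-h^2\Delta_\Omega+V)_-^\gamma$, which avoids invoking differentiability of the trace functional; also, the layer-cake representation $t_-^{\gamma-1}=c_\gamma\int_0^\infty\1_{\{t<-\kappa\}}\kappa^{\gamma-2}\,d\kappa$ at the start of your writeup is not actually used and could be dropped.
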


Here, $(\gamma+d/2)' = (\gamma+d/2)/(\gamma+d/2-1)$ denotes the H\"older dual of $\gamma+d/2$ and, for $\gamma=1$, we use the notation $(-h^2\Delta+V)_-^0 = \1_{\{-h^2\Delta+V<0\}}$.

\begin{proof}[Proof of Theorem \ref{weylschr}]
	Fix $U\in L^{\gamma+d/2}(\Omega)$. We have to show that
	$$
	h^d \tr U D_h \to L_{\gamma-1,d}^\cl \int U V^{\gamma+d/2-1}_- \,dx \,.
	$$
	We have, for every $\lambda\in\R$,
	\begin{align*}
		\lambda \tr U D_h & = - \tr(-h^2\Delta_\Omega +V)D_h + \tr(-h^2\Delta_\Omega +V+\lambda U)D_h \\
		& = \tr(-h^2\Delta_\Omega +V)_-^\gamma + \tr(-h^2\Delta_\Omega +V+\lambda U)D_h \,.
	\end{align*}
	Let us bound the last term on the right side using the H\"older inequality for traces,
	\begin{align*}
		\tr(-h^2\Delta_\Omega +V+\lambda U)D_h & \geq - \tr(-h^2\Delta_\Omega +V+\lambda U)_-D_h \\
		& \geq - \left( \tr(-h^2\Delta_\Omega +V+\lambda U)_-^\gamma \right)^{1/\gamma} \left( \tr D_h^{\gamma'} \right)^{1/\gamma'} \\
		& =  - \left( \tr(-h^2\Delta_\Omega +V+\lambda U)_-^\gamma \right)^{1/\gamma} \left( \tr(-h^2\Delta_\Omega +V)_-^\gamma \right)^{1/\gamma'}.
	\end{align*}
	We insert this into the above equation with $\lambda=\pm\epsilon$, where $\epsilon>0$, and obtain
	\begin{align*}
		& \frac{1}{\epsilon}
		\left( \tr(-h^2\Delta_\Omega +V)_-^\gamma - \left( \tr(-h^2\Delta_\Omega +V+\epsilon U)_-^\gamma \right)^{1/\gamma} \left( \tr(-h^2\Delta_\Omega +V)_-^\gamma \right)^{1/\gamma'} \right) \\
		& \quad \leq \tr U D_h \\
		& \quad \leq \frac{1}{\epsilon}
		\left( \left( \tr(-h^2\Delta_\Omega +V-\epsilon U)_-^\gamma \right)^{1/\gamma} \left( \tr(-h^2\Delta_\Omega +V)_-^\gamma \right)^{1/\gamma'} -\tr(-h^2\Delta_\Omega +V)_-^\gamma \right).
	\end{align*}
	Multiplying by $h^d$ and letting $h\to 0_+$ gives, in view of Theorem \ref{eq:main} (which we have already proved for $\gamma\geq 1$),
	\begin{align*}
		& \frac{1}{\epsilon} L_{\gamma,d}^\cl \left( \int_\Omega V_-^{\gamma+d/2}\,dx - \left( \int_\Omega (V+\epsilon U)_-^{\gamma+d/2} \,dx \right)^{1/\gamma} \left( \int_\Omega V_-^{\gamma+d/2}\,dx \right)^{1/\gamma'} \right) \\
		& \quad \leq \liminf_{h\to 0_+} h^d \tr U D_h \leq \limsup_{h\to 0_+} h^d \tr U D_h \\
		& \quad \leq
		\frac{1}{\epsilon} L_{\gamma,d}^\cl \left( \left( \int_\Omega (V-\epsilon U)_-^{\gamma+d/2} \,dx \right)^{1/\gamma} \left( \int_\Omega V_-^{\gamma+d/2}\,dx \right)^{1/\gamma'}
		- \int_\Omega V_-^{\gamma+d/2} \,dx \right).
	\end{align*}
	The assertion now follows as $\epsilon\to 0_+$ by dominated convergence, using the fact that $L_{\gamma,d}^\cl (\gamma+d/2)/\gamma = L_{\gamma-1,d}^\cl$.
\end{proof}

Let us now turn to the proof of Theorem \ref{main} in the case $\gamma<1$, which we split again into an upper and a lower bound.


\subsection{Lower bound on $\tr(-h^2\Delta_\Omega +V)_-^\gamma$}

First, let $V_-\in L^{\gamma+d/2}\cap L^{\gamma+d/2+1}(\Omega)$. Let $E\subset\Omega$ be a set of finite measure and bound, with $\tilde D_h:=(-h^2\Delta+V)_-^\gamma$,
$$
\Tr(-h^2\Delta_\Omega +V)_-^\gamma = \int_\Omega \rho_{D_h} \,dx \geq \int_E \rho_{D_h} \,dx \,.
$$
Since $\1_E\in L^{(\gamma +d/2+1)'}(\Omega)$, Theorem \ref{weylschr} implies that
$$
\liminf_{h\to 0_+} h^d \Tr(-h^2\Delta_\Omega +V)_-^\gamma \geq \liminf_{h\to 0_+} h^d \int_E \rho_{D_h} \,dx = L_{\gamma,d}^\cl \int_E V_-^{\gamma+d/2}\,dx \,.
$$
If $\Omega$ has finite measure, we can take $E=\Omega$ and we are done. If $\Omega$ has infinite measure, we can take a sequence of $E$'s that increase to $\Omega$ to obtain the claimed bound.

Let us remove the additional assumption $V_- \in L^{\gamma+d/2+1}(\Omega)$. For $M>0$ and $V_M$ as in \eqref{eq:defvm}, we have
$$
\Tr(-h^2\Delta_\Omega +V)_-^\gamma \geq \Tr(-h^2\Delta_\Omega +V_M)_-^\gamma \,.
$$
Since $(V_M)_-\in L^{\gamma+d/2+1}(\Omega)$, we can apply the previous result and obtain
$$
\liminf_{h\to 0_+} h^d \Tr(-h^2\Delta_\Omega +V)_-^\gamma \geq L_{\gamma,d}^\cl \int_{\R^d} (V_M)_-^{\gamma+d/2}\,dx \,.
$$
This implies the claimed lower bound by letting $M\to\infty$.

Note that in this argument we did not use the Lieb--Thirring inequality, thereby proving Remark (b) after Theorem \ref{main}.


\subsection{Upper bound on $\tr(-h^2\Delta_\Omega +V)_-^\gamma$}\label{sec:uppergamma}

The proof of the upper bound is also based on Theorem \ref{weylschr}, but, similarly as in the proof of Theorem \ref{main} for $\gamma=1$, an additional localization argument is needed.

Similarly as in the proof of the lower bound, we first assume $V_-\in L^{\gamma+d/2}\cap L^{\gamma+d/2+1}(\Omega)$. Given $R_->0$, we choose two smooth, real-valued functions $\chi_<$ and $\chi_>$ on $\R^d$ such that $\chi_<^2+\chi_>^2\equiv 1$, such that $\chi_<$ has compact support and such that $\chi_<\equiv 1$ on $B_{R_-}$. By the IMS formula, we have
\begin{align*}
	-h^2\Delta+V & = \chi_< \left(-h^2\Delta_\Omega +V_h \right)\chi_< + \chi_> \left(-h^2\Delta_\Omega +V_h \right)\chi_> \,.
\end{align*}
with $V_h$ from \eqref{eq:imspot}. This, together with a simple consequence of the variational principle (see, e.g., \cite[Proposition 1.40]{FrLaWe}), gives that for any $0<\theta<1$,
\begin{align}\label{eq:rotfeld}
	\tr(-h^2\Delta+V)_-^\gamma & \leq \theta^{-\gamma} \tr\left( \chi_< \left(-h^2\Delta_\Omega +V_h \right)\chi_< \right)_-^\gamma \notag \\
	& \quad + (1-\theta)^{-\gamma} \tr\left( \chi_> \left(-h^2\Delta_\Omega +V_h \right)\chi_> \right)_-^\gamma.
\end{align}
(Indeed, it follows from Rot'feld's inequality \cite[Proposition 1.43]{FrLaWe}) that both constants $\theta^{-\gamma}$ and $(1-\theta)^{-\gamma}$ can be replaced by $1$, but we do not need this much more subtle result.)

Let us discuss the two terms on the right side of \eqref{eq:rotfeld} separately. We begin with the first one. Let $R_+<\infty$ be such that the support of $\chi_<$ is contained in the ball $B_{R_+}$ and put $\omega:=\Omega\cap B_{R_+}$. Then, as a consequence of the variational principle (see, e.g., \cite[Corollary 1.31 and Lemma 1.44]{FrLaWe}),
\begin{align*}
	\tr\left( \chi_< \left(-h^2\Delta_\Omega +V_h \right)\chi_< \right)_-^\gamma
	& \leq \tr \left( \chi_< \left(-h^2\Delta_\omega + \1_\omega V_h \right)\chi_< \right)_-^\gamma \\
	& \leq \tr \left(-h^2\Delta_\omega + \1_\omega V_h \right)_-^\gamma \,.
\end{align*}
For given $h_0>0$ we set $D_h':=\left( -h^2\Delta_\omega + \1_\omega V_{h_0} \right)_-^\gamma$. Then for all $h\leq h_0$
\begin{align*}
	\tr \left(-h^2\Delta_\omega + \1_\omega V_h \right)_-^\gamma
	\leq \tr \left(-h^2\Delta_\omega + \1_\omega V_{h_0} \right)_-^\gamma = \int_\omega D_h'\,dx \,.
\end{align*}
By Theorem \ref{weylschr}, we conclude that
\begin{align*}
	\limsup_{h\to 0_+} h^d \tr \left(-h^2\Delta_\omega + \1_\omega V_h \right)_-^\gamma
	& \leq \limsup_{h\to 0_+} h^d \int_\omega \rho_{D_h'}\,dx \\
	& = L_{\gamma,d}^\cl \int_\omega \left( V_{h_0}\right)_-^{\gamma+d/2} \,dx \,.
\end{align*}
Since $h_0>0$ is arbitrary, monotone convergence allows us to replace $V_{h_0}$ on the right side by $V$. Thus, we have shown that
$$
\limsup_{h\to 0_+} h^d \tr\left( \chi_< \left(-h^2\Delta_\Omega +V_h \right)\chi_< \right)_-^\gamma
\leq L_{\gamma,d}^\cl \int_\Omega V_-^{\gamma+d/2} \,dx \leq L_{\gamma,d}^\cl \int_{\R^d} V_-^{\gamma+d/2} \,dx \,.
$$

We turn now to the second term on the right side of \eqref{eq:rotfeld}. Similarly as before, we have
$$
\tr\left( \chi_> \left(-h^2\Delta_\Omega +V_h \right)\chi_> \right)_-^\gamma
\leq \tr\left( -h^2\Delta_\Omega + \1_{\Omega\cap B_{R_-}^c} V_h \right)_-^\gamma
$$
According to the Lieb--Thirring inequality (which is valid under our assumptions on $\gamma$), the right side is bounded by
$$
h^{-d} L_{\gamma,d} \int_{\Omega\cap B_{R_-}^c} \left( V_h\right)_-^{\gamma+d/2}\,dx 
= h^{-d} L_{\gamma,d} \left( \int_{\Omega\cap B_{R_-}^c} V_-^{\gamma+d/2}\,dx + o(1) \right). 
$$

Returning to \eqref{eq:rotfeld}, we have shown that
$$
\limsup_{h\to 0_+} h^d \tr(-h^2\Delta_\Omega +V)_-^\gamma \leq L_{\gamma,d}^\cl \int_\Omega V_-^{\gamma+d/2}\,dx + L_{\gamma,d} \int_{\Omega\cap B_{R_-}^c} V_-^{\gamma+d/2}\,dx \,.
$$
Leting $R_-\to\infty$ and using dominated convergence, we deduce the claimed asymptotic upper bound under the additional assumption $V_- \in L^{\gamma+d/2+1}(\Omega)$.

Finally, this additional assumption can be removed using the Lieb--Thirring inequality as, for instance, in \cite[Theorem 4.46]{FrLaWe}. This completes the proof of Theorem \ref{main} in the remaining case $0\leq\gamma<1$.


\section{Extension to the magnetic case}\label{sec:magnetic}

It is of interest to study the corresponding questions in the presence of a magnetic field. Let again $\Omega\subset\R^d$ be an open set and let $A\in L^2_\loc(\Omega,\R^d)$. We consider operators of the form
$$
M_h + V = (-ih\nabla +g_hA)^2 + V
\qquad\text{in}\ L^2(\Omega) \,,
$$
where we assume through this section that the coupling constant $g_h\in\R$ satisfies
$$
\limsup_{h\to 0_+} |g_h| <\infty \,.
$$
There are at least two natural choices of $g_h$, namely, $g_h=1$ and $g_h=h$, which arise in applications. It presents no extra effort, however, to deal with the general case.

Concerning $V$ we make the same assumptions as in Theorem \ref{main}, that is, $V\in L^1_\loc(\Omega)$ and $V_-\in L^{\gamma+d/2}(\Omega)$ for $\gamma$ as specified there. Technically, the operator $M_h+V$ is defined through the closure of the quadratic form
$$
\int_\Omega \left( |(-ih\nabla + g_h A)\psi|^2 + V|\psi|^2\right)dx
$$
defined on $C_c^\infty(\Omega)$. Functions $\psi$ in the domain of this closure satisfy $(-ih\nabla+g_hA)\psi\in L^2(\Omega)$ and $V_+|\psi|^2\in L^2(\Omega)$, as well as, in a certain sense, Dirichlet boundary conditions on $\partial\Omega$.

The following result is the extension of Theorem \ref{main} to the magnetic case.

\begin{theorem}\label{mainmag}
	Let $\gamma\geq 1/2$ if $d=1$, $\gamma>0$ if $d=2$ and $\gamma\geq 0$ if $d\geq 3$. Let $\Omega\subset\R^d$ be an open set, let $V\in L^1_\loc(\Omega)$ with $V_-\in L^{\gamma+d/2}(\Omega)$ and let $A\in L^2_\loc(\Omega,\R^d)$. Then
	\begin{equation}
		\label{eq:mainmag}
		\lim_{h\to 0_+} h^d \Tr \left( M_h + V \right)_-^\gamma = L_{\gamma,d}^\cl \int_\Omega V(x)_-^{\gamma+d/2}\,dx \,.
	\end{equation}
\end{theorem}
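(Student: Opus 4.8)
The plan is to follow the three-step scheme of the non-magnetic proof, replacing $-h^2\Delta_\Omega$ by $M_h$ everywhere and isolating the contributions of the vector potential. First one proves \eqref{eq:mainmag} for $\gamma=1$ by coherent states (splitting, as in Section~\ref{sec:1}, into a lower and an upper bound on $\Tr(M_h+V)_-$); then $\gamma>1$ is deduced from $\gamma=1$ by the integral representation \eqref{eq:representation} applied to $M_h+V+\kappa$, using the \emph{magnetic} Lieb--Thirring inequality $\Tr(M_h+V)_-^\gamma\lesssim h^{-d}\int_\Omega V_-^{\gamma+d/2}\,dx$ — valid with a non-sharp constant as a consequence of the diamagnetic inequality — to justify dominated convergence; and finally $0\le\gamma<1$ is treated as in Section~\ref{sec:<1}. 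This last step needs the analogue of Theorem~\ref{weylschr} with $M_h$ in place of $-h^2\Delta$; its proof is purely variational and operator-theoretic and carries over verbatim, as do the IMS-localization arguments of Subsections~\ref{sec:lower} and \ref{sec:uppergamma}, since the vector potential is a multiplication operator, commutes through all the cutoffs, and leaves the IMS potential \eqref{eq:imspot} unchanged. Thus everything reduces to the case $\gamma=1$ together with the (standard) magnetic Lieb--Thirring inequality and its dual kinetic-energy form $\tr M_h\Gamma\ge h^2K_d\int\rho_\Gamma^{1+2/d}$.

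For the $\gamma=1$ lower bound, the only modification to Subsection~\ref{sec:upper} is the kinetic-energy identity: for real $g$ with compact support and $A\in L^2_\loc$, the cross terms vanish (because $\int\nabla(g^2)=0$), giving
\[
\|(-ih\nabla + g_hA)G_{p,q}\|^2 = \int_{\R^d}|hp + g_hA(x)|^2\,g(x-q)^2\,dx + h^2\|\nabla g\|^2 ,
\]
while \eqref{eq:cslowerpot} is unchanged. Taking the trial density matrix built from the indicator $\1_{\{|hp+g_h\langle A\rangle(q)|^2 + V_M(q)<0\}}$ and performing the ($q$-dependent) translation $p\mapsto p-g_h\langle A\rangle(q)/h$, which leaves $dp$ invariant, one reproduces the computation of Subsection~\ref{sec:upper} with a single new remainder,
\[
\mathcal R_A := g_h^2\,L_{0,d}^\cl\int_\omega V_M(q)_-^{d/2}\big(\langle|A|^2\rangle_\delta(q) - |\langle A\rangle_\delta(q)|^2\big)\,dq \ge 0 ,
\]
where $g=g_\delta$. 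Since $\limsup_{h\to0_+}|g_h|<\infty$ and $V_M(q)_-^{d/2}\le M^{d/2}$, and since $A\in L^2_\loc$ forces $\langle|A|^2\rangle_\delta\to|A|^2$ in $L^1(\omega)$ and $\langle A\rangle_\delta\to A$ in $L^2(\omega)$ (hence $|\langle A\rangle_\delta|^2\to|A|^2$ in $L^1(\omega)$) as $\delta\to0_+$, one gets $\lim_{\delta\to0_+}\mathcal R_A=0$. This yields \eqref{eq:mainlower} for $M_h+V$ — and, Lieb--Thirring not having been used, for all $\gamma\ge0$, which is the magnetic form of Remark~(b).

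For the $\gamma=1$ upper bound one drops $V_+$ and runs the dual argument of Subsection~\ref{sec:lower}. Expressing $\tr(M_h+V)\Gamma$ through the $(G_{p,q},\Gamma G_{p,q})$-integral and shifting $p$, the leading term is again $-h^{-d}L_{1,d}^\cl\int_\Omega V_-^{1+d/2}$; besides the old remainders $-h^2\|\nabla g\|^2\tr\Gamma$ and $\int(V-V*|g|^2)\rho_\Gamma$ — controlled exactly as before via the a priori bound $\|\rho_\Gamma\|_{1+2/d}\lesssim h^{-d}\|V_-\|_{1+d/2}^{d/2}$, now furnished by the magnetic Lieb--Thirring kinetic-energy inequality applied to competitors with $\tr(M_h+V)\Gamma\le0$ — two genuinely new terms appear,
\[
2hg_h\int(A-A*|g|^2)\cdot\mathbf j_\Gamma \qquad\text{and}\qquad g_h^2\int(|A|^2 - |A|^2*|g|^2)\,\rho_\Gamma ,
\]
with $\mathbf j_\Gamma := \sum_n\nu_n\im(\overline{\psi_n}\nabla\psi_n)$ the current density of $\Gamma=\sum_n\nu_n|\psi_n\rangle\langle\psi_n|$. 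The passage from finite-measure $\Omega$ to the general case is, as before, the IMS argument of Subsection~\ref{sec:lower} together with the magnetic Lieb--Thirring inequality for the exterior term.

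The main obstacle is precisely the control of these two new terms. With only $A\in L^2_\loc$, the factor $|A|^2 - |A|^2*|g_\delta|^2$ tends to $0$ only in $L^1_\loc$, not in the $L^{1+d/2}$ that pairs with the a priori $L^{1+2/d}$-bound on $\rho_\Gamma$ (that would require $A\in L^{2+d}_\loc$); likewise $\mathbf j_\Gamma$ is only bounded pointwise by $\rho_\Gamma^{1/2}\big(\sum_n\nu_n|\nabla\psi_n|^2\big)^{1/2}$, whose $L^1$-mass is the \emph{non-magnetic} kinetic energy $\tr(-h^2\Delta)\Gamma$, which is not directly controlled by $\tr M_h\Gamma$. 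In the lower bound this difficulty is absent because the $A$-error there is weighted by the bounded function $V_M(q)_-^{d/2}$ rather than by $\rho_\Gamma$. I expect the resolution to require an additional localization onto cubes $Q$ of side $\ell$ with $h\ll\ell\ll1$ chosen so small that $\int_Q|A|^2$ is negligible, combined with a gauge transformation removing the mean $\overline{A}_Q$ on each cube — so that only $\int_Q|A-\overline{A}_Q|^2\to0$ enters, by the Lebesgue differentiation theorem — together with a correspondingly more careful accounting of the kinetic energy; carrying this out is where the real work beyond the non-magnetic case lies. Once the $\gamma=1$ upper bound is secured, the reductions to $\gamma\neq1$ described in the first paragraph are automatic.
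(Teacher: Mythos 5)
Your diagnosis of the difficulty is exactly right, but your proposed architecture never overcomes it, and the paper does not overcome it either---the paper routes around it. You reduce everything to the $\gamma=1$ magnetic upper bound and then concede that the two new coherent-state remainders
\[
2hg_h\int(A-A*|g|^2)\cdot\mathbf j_\Gamma ,\qquad g_h^2\int\bigl(|A|^2-|A|^2*|g|^2\bigr)\rho_\Gamma
\]
cannot be controlled with only $A\in L^2_\loc$ and the a priori bound $\|\rho_\Gamma\|_{1+2/d}\lesssim h^{-d}\|V_-\|_{1+d/2}^{d/2}$. That concession means the proof is not complete: the gauge-on-small-cubes heuristic at the end is a sketch of an idea, not an argument, and the current term $\mathbf j_\Gamma$ is genuinely problematic because its natural bound involves the \emph{non-magnetic} kinetic energy, which the magnetic Lieb--Thirring estimate does not furnish. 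The paper explicitly remarks that it was \emph{not} able to find a coherent-states proof of \eqref{eq:mainmagupper} and does something structurally different.

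The actual route is: (i) for $\gamma\geq 3/2$ the upper bound is \emph{free}, because the sharp magnetic Laptev--Weidl inequality gives the nonasymptotic estimate $h^d\tr(M_h+V)_-^\gamma\leq L_{\gamma,d}^\cl\int_\Omega V_-^{\gamma+d/2}$ (after an approximation argument in $A$ via strong resolvent convergence and lower semicontinuity of the trace); (ii) the lower bound \eqref{eq:mainmaglower} for $\gamma\geq 1$ is proved by coherent states exactly as you describe, and extended to $\gamma>1$ by the representation formula \eqref{eq:representation} together with \emph{Fatou}, not dominated convergence---only the lower bound is needed at this stage, so no a priori magnetic Lieb--Thirring bound is invoked; (iii) this yields \eqref{eq:mainmag} for $\gamma\geq 3/2$, and then the weak-convergence mechanism of Theorem~\ref{weylschr} is applied with $M_h$ in place of $-h^2\Delta_\Omega$. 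Crucially, that theorem converts validity of \eqref{eq:mainmag} at level $\gamma$ into weak convergence of $h^d\rho_{D_h}$ with $D_h=(M_h+V)_-^{\gamma-1}$, and hence into \eqref{eq:mainmag} at level $\gamma-1$ via the localization arguments of Section~\ref{sec:<1}. Starting from $\gamma\geq 3/2$ this yields $\gamma\geq 1/2$; iterating once more (now the hypothesis $\gamma\geq 1$ for Theorem~\ref{weylschr} is satisfied) yields the full claimed range. This is precisely why the paper emphasizes that the weak-convergence argument must not be limited to $\gamma=1$: the magnetic case \emph{needs} the deduction $\gamma\mapsto\gamma-1$ starting at $\gamma=3/2$, whereas in the nonmagnetic case one could get away with $\gamma=1$ alone. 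So the missing ingredient in your write-up is not a technical estimate on $\mathbf j_\Gamma$; it is the observation that the $\gamma=1$ upper bound need never be proved directly, because $\gamma\geq 3/2$ comes for free from Laptev--Weidl and the rest follows by descent.
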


\emph{Remarks.}
(a) As an example, note that in 2D the case of an Aharonov--Bohm vector potential is included. Indeed, the assumptions are satisfied with $A(x) = |x|^{-2}(-x_2,x_1)^{\rm T}$ and $\Omega=\R^2\setminus\{0\}$.\\
(b) In \cite{Fr0} we have shown this theorem in case $V$ is a negative constant, $g_h=h$ and $\Omega$ has finite measure. The proof uses a Tauberian theorem and relies on the fact that one essentially deals with the eigenvalues of a single, $h$-independent operator. We do not see how to extend this proof to a more general setting.\\
(c) The proof of \eqref{eq:mainmag} splits again into two parts, namely,
\begin{equation}
	\label{eq:mainmaglower}
	\liminf_{h\to 0_+} h^d \Tr \left( M_h + V \right)_-^\gamma \geq L_{\gamma,d}^\cl \int_\Omega V(x)_-^{\gamma+d/2}\,dx
\end{equation}
and
\begin{equation}
	\label{eq:mainmagupper}
	\limsup_{h\to 0_+} h^d \Tr \left( M_h + V \right)_-^\gamma \leq L_{\gamma,d}^\cl \int_\Omega V(x)_-^{\gamma+d/2}\,dx \,.
\end{equation}
That of \eqref{eq:mainmaglower} is analogous to the corresponding proof for Theorem \ref{main}. For the proof of \eqref{eq:mainmagupper}, however, we have not been able to find an analogous way. Instead, we rely on the sharp Lieb--Thirring inequality of Laptev and Weidl \cite{LaWe}, which is also valid in the magnetic case. The fact that this inequality is only valid for $\gamma\geq 3/2$ necessitates some extra arguments. In particular, it is crucial here that the weak convergence argument is not limited to $\gamma=1$. Thus, we give also a partially alternate proof of \eqref{eq:mainupper}, although we feel that in the nonmagnetic case our first proof is more natural.\\
(d) Our proof shows that \eqref{eq:mainmaglower} holds for all $\gamma\geq 0$, provided the operators $M_h+V$ are well-defined.\\
(e) Corollary \ref{maincor} remains valid in the magnetic case, with the same proof.


\subsection{The upper bound on $\Tr \left( M_h + V \right)_-^\gamma$ for $\gamma\geq 3/2$}

In this subsection, prove \eqref{eq:mainmagupper} for $\gamma\geq 3/2$. In fact, in this case we have the nonasymptotic result
\begin{equation}
	\label{eq:ltlw}
	h^d\, \tr\left( M_h +V\right)_-^\gamma \leq L_{\gamma,d}^\cl \int_\Omega V(x)_-^{\gamma+d/2}\,dx \,.
\end{equation}
To prove this, by the variational principle, we may and will assume that $V\leq 0$. In the case $\Omega=\R^d$, this is \cite[Theorem 3.2]{LaWe}. By the variational principle, this implies the result for arbitrary open sets $\Omega\subset\R^d$ and $A\in L^2_{\loc}(\overline\Omega,\R^d)$. To deal with the general case where $A\in L^2_\loc(\Omega,\R^d)$ we need an approximation argument. We use the fact that, if $A_n\to A$ in $L^2_\loc(\Omega,\R^d)$, then the corresponding operators $M_{h,n}$ converge in strong resolvent sense to $M_h$. This can be proved in the same way as in \cite[Lemma 5]{LeSi}. The strong convergence implies that $\left( M_{h,n} +V\right)_-^\gamma$ converges strongly to $\left( M_h +V\right)_-^\gamma$; see \cite[Theorem VIII.20]{ReSi}. Here we also use the fact that the operators $M_{h,n}+V$ are bounded from below uniformly in $n$, so the function $f(\lambda)=\lambda_-^\gamma$ can be truncated near $-\infty$. Now the lower semicontinuity of the trace under weak convergence implies
$$
\tr \left( M_h +V\right)_-^\gamma \leq \liminf_{n\to\infty} \left( M_{h,n} +V\right)_-^\gamma \,.
$$
Therefore, if we assume $A_n\in L^2_\loc(\overline\Omega,\R^d)$, then the inequality for $M_{h,n}$ implies the claimed inequality for $M_h$.


\subsection{The lower bound on $\Tr \left( M_h + V \right)_-^\gamma$ for $\gamma\geq 1$}

In this subsection, we prove \eqref{eq:mainmaglower} for $\gamma\geq 1$. 

\subsubsection*{The case $\gamma=1$}

We follow very closely the arguments in Subsection \ref{sec:upper}. We fix again $\omega$, $g$ and $M$. Now we choose as a trial matrix
$$
\Gamma := \iint_{\R^d\times\omega} \1_{\{(hp+g_h \langle A \rangle(q))^2 + V_M(q)<0\}} |G_{p,q}\rangle\langle G_{p,q}| \,\frac{dp\,dq}{(2\pi)^d} \,.
$$
The magnetic analogue of \eqref{eq:cslowerkin} is
\begin{equation}
	\label{eq:cslowerkinmag}
	\|(-i \nabla + A) G_{p,q}\|^2 = p^2 + 2 p \cdot \langle A \rangle (q) + \langle A^2 \rangle (q) + \|\nabla g\|^2 \,,
\end{equation}
which is proved by a similar argument. Using this, we obtain, as before,
\begin{align*}
	& -\tr\left( M_h +V\right)_- \leq \tr \left( M_h + V \right)\Gamma \\
	& \quad = \iint_{\R^d\times\omega} \1_{\{ (hp+g_h\langle A \rangle(q))^2 + V_M(q)<0\}} \left( (h p+g_h \langle A \rangle(q))^2 + V_M(q) \right) \frac{dp\,dq}{(2\pi)^d} + h^{-d} \mathcal R^{\rm mag} \\
	& \quad = - h^{-d} L_{1,d}^\cl \int_{\omega} V_M(q)_-^{1+d/2} \,dq +h^{-d} \mathcal R^{\rm mag} \,.
\end{align*}
Here we have set $\mathcal R^{\rm mag} = \mathcal R + \mathcal R_3$ with $\mathcal R$ as in the nonmagnetic case and with
\begin{align*}
	\mathcal R_3 & := - h^d g_h^2 \iint_{\R^d\times\omega} \1_{\{ (hp+g_h\langle A \rangle(q))^2 + V_M(q)<0\}} \left( \langle A\rangle(q)^2 - \langle A^2 \rangle(q) \right) \frac{dp\,dq}{(2\pi)^d} \\
	& = - g_h^2 L_{0,d}^\cl \int_\omega V_M(q)_-^{d/2} \left( \langle A\rangle(q)^2 - \langle A^2 \rangle(q) \right) dq \,.
\end{align*}
The integral here is finite, since, by the support properties of $g$, it only depends on the restriction of $A$ to a neighborhood of $\omega$ that is compactly contained in $\Omega$. Thus, when $\lim_{h\to 0_+} g_h=0$, the inequality \eqref{eq:mainmaglower} follows immediately from the argument in the nonmagnetic case.

In the general case, we argue similarly as for the term $\mathcal R_2'$ in Subsection \ref{sec:upper} by letting the support of $g$ tend to zero. More precisely, we find that $\langle A \rangle_\delta \to A$ in $L^2(\omega)$ and $\langle A^2 \rangle_\delta \to A^2$ in $L^1(\omega)$. Since $(V_M)_-\in L^\infty(\omega)$, this proves that $\lim_{\delta\to 0} \mathcal R_2'=0$. This completes the proof of \eqref{eq:mainmaglower} for $\gamma=1$.


\subsubsection*{The case $\gamma>1$}

We argue similarly as in Section \ref{sec:>1} based on the magnetic analogue of \eqref{eq:representation}. By Fatou's lemma, this formula together with the $\gamma=1$ case of \eqref{eq:mainmaglower} implies, for $\gamma>1$,
\begin{align*}
	\liminf_{h\to 0_+} h^d \tr\left( M_h +V \right)_-^\gamma & \geq \gamma(\gamma-1) \int_0^\infty \liminf_{h\to 0_+} h^d \tr\left( M_h +V+\kappa \right)_- \kappa^{\gamma-2}\,d\kappa \\
	& \geq \gamma(\gamma-1) \int_0^\infty L_{\gamma,d}^\cl \int_\Omega (V+\kappa)_-^{\gamma+d/2}\,dx\,\kappa^{\gamma-2}\,d\kappa \,.
\end{align*}
By \eqref{eq:scconsts}, this gives \eqref{eq:mainmaglower} for $\gamma>1$.


\subsection{Proof of Theorem \ref{mainmag}}

It follows from the previous two subsections that \eqref{eq:mainmag} holds for all $\gamma\geq 3/2$.

By the same argument as in the proof of Theorem \ref{weylschr}, for $\gamma\geq 3/2$, we obtain the weak convergence
$$
h^d \rho_{D_h} \rightharpoonup L_{\gamma-1,d}^\cl V_-^{\gamma+d/2-1}
\qquad\text{in}\ L^{(\gamma+d/2)'}(\Omega)
$$
for $D_h := (M_h+V)_-^{\gamma-1}$. Based on this weak convergence, we can continue to argue as in Section \ref{sec:<1} and deduce that \eqref{eq:mainmag} holds for all $\gamma\geq 1/2$. Note that here in the analogue of the approximation argument at the end of Subsection \ref{sec:uppergamma} we use the fact that the Lieb--Thirring inequality holds with a constant independent of the magnetic field; see, e.g., \cite[Section 4.9]{FrLaWe}.

Now we can iterate this argument. From the validity of \eqref{eq:mainmag} for all $\gamma\geq 1$ we infer that the weak convergence holds in fact for $\gamma\geq 1$ and then we can conclude as before that \eqref{eq:mainmag} holds in the full range claimed in Theorem \ref{mainmag}. This completes its proof.\qed


\bibliographystyle{amsalpha}

\end{document}